\documentclass[reqno, 12pt]{amsart}
\pdfoutput=1
\makeatletter
\let\origsection=\section \def\section{\@ifstar{\origsection*}{\mysection}}
\def\mysection{\@startsection{section}{1}\z@{.7\linespacing\@plus\linespacing}{.5\linespacing}{\normalfont\scshape\centering\S}}
\makeatother

\usepackage{amsmath,amssymb,amsthm}
\usepackage{mathrsfs}
\usepackage{mathabx}\changenotsign
\usepackage{dsfont}

\usepackage[dvipsnames]{xcolor}
\usepackage[backref]{hyperref}
\hypersetup{
    colorlinks,
    linkcolor={red!60!black},
    citecolor={green!60!black},
    urlcolor={blue!60!black}
}

\usepackage{graphicx}

\usepackage{caption}
\usepackage{subcaption}
\usepackage[labelformat=simple,labelfont={}]{subcaption}

\usepackage[open,openlevel=2,atend]{bookmark}

\usepackage[abbrev,msc-links,backrefs]{amsrefs}
\usepackage{doi}

\renewcommand{\PrintDOI}[1]{\doi{#1}}

\usepackage[T1]{fontenc}
\usepackage{lmodern}
\usepackage[babel]{microtype}
\usepackage[english]{babel}

\linespread{1.3}
\usepackage{geometry}
\geometry{left=27.5mm,right=27.5mm, top=25mm, bottom=25mm}
\numberwithin{equation}{section}
\numberwithin{figure}{section}

\usepackage{enumitem}
\def\rmlabel{\upshape({\itshape \roman*\,})}

\usepackage{pgf,tikz}

\makeatletter
\def\greek#1{\expandafter\@greek\csname c@#1\endcsname}
\def\Greek#1{\expandafter\@Greek\csname c@#1\endcsname}
\def\@greek#1{\ifcase#1
	\or $\alpha$%
	\or $\beta$%
	\or $\gamma$%
	\or $\delta$%
	\or $\epsilon$%
	\or $\zeta$%
	\or $\eta$%
	\or $\theta$%
	\or $\iota$%
	\or $\kappa$%
	\or $\lambda$%
	\or $\mu$%
	\or $\nu$%
	\or $\xi$%
	\or $o$%
	\or $\pi$%
	\or $\rho$%
	\or $\sigma$%
	\or $\tau$%
	\or $\upsilon$%
	\or $\phi$%
	\or $\chi$%
	\or $\psi$%
	\or $\omega$%
\fi}
\def\@Greek#1{\ifcase#1
	\or $\mathrm{A}$%
	\or $\mathrm{B}$%
	\or $\Gamma$%
	\or $\Delta$%
	\or $\mathrm{E}$%
	\or $\mathrm{Z}$%
	\or $\mathrm{H}$%
	\or $\Theta$%
	\or $\mathrm{I}$%
	\or $\mathrm{K}$%
	\or $\Lambda$%
	\or $\mathrm{M}$%
	\or $\mathrm{N}$%
	\or $\Xi$%
	\or $\mathrm{O}$%
	\or $\Pi$%
	\or $\mathrm{P}$%
	\or $\Sigma$%
	\or $\mathrm{T}$%
	\or $\mathrm{Y}$%
	\or $\Phi$%
	\or $\mathrm{X}$%
	\or $\Psi$%
	\or $\Omega$%
\fi}

\AddEnumerateCounter{\greek}{\@greek}{24}
\AddEnumerateCounter{\Greek}{\@Greek}{12}

\makeatother

\let\polishlcross=\l
\def\l{\ifmmode\ell\else\polishlcross\fi}

\def\paragraph#1{%
  \noindent\textbf{#1.}\enspace}

\let\emptyset=\varnothing
\let\setminus=\smallsetminus

\makeatletter
\def\moverlay{\mathpalette\mov@rlay}
\def\mov@rlay#1#2{\leavevmode\vtop{   \baselineskip\z@skip \lineskiplimit-\maxdimen
   \ialign{\hfil$\m@th#1##$\hfil\cr#2\crcr}}}
\newcommand{\charfusion}[3][\mathord]{
    #1{\ifx#1\mathop\vphantom{#2}\fi
        \mathpalette\mov@rlay{#2\cr#3}
      }
    \ifx#1\mathop\expandafter\displaylimits\fi}
\makeatother

\DeclareFontFamily{U}  {MnSymbolC}{}
\DeclareSymbolFont{MnSyC}         {U}  {MnSymbolC}{m}{n}
\DeclareFontShape{U}{MnSymbolC}{m}{n}{
    <-6>  MnSymbolC5
   <6-7>  MnSymbolC6
   <7-8>  MnSymbolC7
   <8-9>  MnSymbolC8
   <9-10> MnSymbolC9
  <10-12> MnSymbolC10
  <12->   MnSymbolC12}{}
\DeclareMathSymbol{\powerset}{\mathord}{MnSyC}{180}

\usepackage{tikz}
\usetikzlibrary{calc,decorations.pathmorphing}
\pgfdeclarelayer{background}
\pgfdeclarelayer{foreground}
\pgfdeclarelayer{front}
\pgfsetlayers{background,main,foreground,front}

\let\epsilon=\varepsilon
\let\eps=\epsilon
\let\rho=\varrho
\let\theta=\vartheta
\let\kappa=\varkappa

\let\E=\EE

\def\PP{{\mathds P}}
\let\Prob=\PP

\newcommand{\cA}{\mathcal{A}}

\newcommand{\cE}{\mathcal{E}}

\theoremstyle{plain}
\newtheorem{thm}{Theorem}[section]
\newtheorem{theorem}[thm]{Theorem}

\newtheorem{prop}[thm]{Proposition}

\newtheorem{fact}[thm]{Fact}

\newtheorem{lemma}[thm]{Lemma}

\theoremstyle{definition}

\newtheorem{prob}[thm]{Problem}

\usepackage{accents}

\let\phi=\varphi

\begin{document}

\title[Tight multiple twins in permutations]{Tight multiple twins in permutations}

\author{Andrzej Dudek}
\address{Department of Mathematics, Western Michigan University, Kalamazoo, MI, USA}
\email{\tt andrzej.dudek@wmich.edu}
\thanks{The first author was supported in part by Simons Foundation Grant \#522400.}

\author{Jaros\l aw Grytczuk}
\address{Faculty of Mathematics and Information Science, Warsaw University of Technology, Warsaw, Poland}
\email{j.grytczuk@mini.pw.edu.pl}
\thanks{The second author was supported in part by the Polish NSC grant 2015/17/B/ST1/02660.}

\author{Andrzej Ruci\'nski}
\address{Department of Discrete Mathematics, Adam Mickiewicz University, Pozna\'n, Poland}
\email{\tt rucinski@amu.edu.pl}
\thanks{The third author was supported in part by the Polish NSC grant 2018/29/B/ST1/00426}

\begin{abstract}
Two permutations are similar if they have the same length and the same relative order. A collection of $r\ge2$ disjoint, similar subsequences of
a permutation $\pi$ form $r$-twins in $\pi$. We study the longest guaranteed length of $r$-twins which are tight in the sense that either each twin alone forms a block or their union does. We address the same question with respect to a random permutation.
\end{abstract}

\maketitle


\setcounter{footnote}{1}

\section{Introduction}

By a \emph{permutation} we mean any finite sequence of distinct integers. We say that two permutations $(x_1,\dots,x_k)$ and $(y_1,\dots,y_k)$ are \emph{similar} if their entries preserve the same relative order, that is, $x_i<x_j$ if and only if $y_i<y_j$ for all pairs $\{i,j\}$ with $1\leqslant i<j\leqslant k$. For instance, $(2,1,3)$ is similar to $(5,4,8)$.

Large pairs of similar sub-permutations (called \emph{twins}) in a given, or random, permutation have recently attracted some attention (cf. \cite{Gawron},\cite{BukhR},\cite{DGR}). Here we are exclusively devoted to twins which appear in blocks.

A \emph{block} in a permutation $\pi$ is any subsequence of $\pi$ occupying a non-empty segment of consecutive positions. For instance, the permutation below contains 8 blocks of length~6. Some of them, like the highlighted one, enjoy a property which is of special interest to us: they  consists of two similar blocks of length $3$:
$$(12,6,7,\colorbox{Lavender}{2,1,3},\colorbox{green}{5,4,8},13,10,9,11).$$

How long such \emph{``order repetitions''} must occur in every long permutation? Clearly, every permutation of length at least $2$ contains trivial order repetitions of length $1$. Surprisingly, this is all what you get: as proved by Avgustinowich, Kitaev, Pyatkin, and Valyuzhenich in~\cite{AvgustinovichKPV}, there exist arbitrarily long permutations without order repetitions of any length greater than 1. This result is a permutation analog of the famous  theorem of Thue \cite{Thue} from 1906,  establishing the existence of arbitrarily long words over a $3$-letter alphabet avoiding word repetitions of any length, even 1 (see \cite{Lothaire}). Both results are constructive and provide simple recursive procedures for generating these objects. 

In \cite{DGR} we introduced a stronger avoidance property of permutations, defined as follows. A block in a permutation forms \emph{tight twins} of \emph{length $k$} if it consists of two similar disjoint \emph{subsequences} of length $k$ each. For instance, in a permutation below there are tight twins of length $3$, namely $(2,1,3)$ and $(5,4,8)$, that do not form a repetition:
$$(12,6,7,\colorbox{Lavender}{2},\colorbox{green}{5},\colorbox{green}{4},\colorbox{Lavender}{1},\colorbox{Lavender}{3},\colorbox{green}{8}13,10,9,11).$$
Note that the containment of tight twins, and, in particular, order repetitions, is not monotone in the sense that the absence of tight twins of length $k$ does not exclude the presence of tight twins longer than $k$. By using the probabilistic method, we proved in~\cite{DGR}, Thm. 3.6, that there exist arbitrarily long permutations without tight twins longer than~$12$. Most likely this constant is not optimal, but it cannot go all the way down to $1$, as every permutation of length 6 contains tight twins of length 2 (see \cite{DGR}, Prop. 3.7).

In this paper we study  generalizations of such problems to multiple twins. Let $r\geqslant 2$ be a fixed integer and $\pi$ be a permutation. We say that a block in $\pi$ consisting of $r$ similar disjoint subsequences, each of length $k$, forms \emph{tight $r$-twins} of \emph{length} $k$. For example, the following permutation contains tight $4$-twins of length $3$, namely $(2,1,3)$, $(5,4,8)$, $(15,7,17)$, $(12,9,16)$:
$$(14,18,\colorbox{Lavender}{2},\colorbox{green}{5},\colorbox{green}{4},\colorbox{Lavender}{1},\colorbox{cyan}{15},\colorbox{yellow}{12},
\colorbox{cyan}{7},\colorbox{cyan}{17},\colorbox{green}{8},\colorbox{yellow}{9},\colorbox{yellow}{16},\colorbox{Lavender}{3},6,10,11,13).$$
How long tight $r$-twins must occur in every permutation of length $n$? How long tight $r$-twins are contained, with high probability, in a \emph{random} permutation of the set $[n]=\{1,2,\dots,n\}$?

Let $tt^{(r)}(n)$ denote the largest integer $k$ such that every permutation of length $n$ contains tight $r$-twins of length $k$. Our result from \cite{DGR}, mentioned above, states that $tt^{(2)}(n)\leqslant 12$. Here we prove (Theorem \ref{Theorem Tight Twins}) that for every $r\geqslant 3$ we have
$$tt^{(r)}(n)\leqslant 15r.$$
 Note that this bound is independent of $n$. In contrast, we show (Theorem \ref{bttRandom}) that a random permutation of $[n]$ with high probability contains tight $r$-twins of length $\sim\frac{\log n}{(r-1)\log \log n}$.

We also consider a related function $f(r,k)$ defined as the least $n$ such that every permutation of length $n$ contains tight $r$-twins of length \emph{exactly} $k$. It is not hard to see that $f(2,2)=6$. We also determined, with a little help of computer, that $f(3,2)=12$. However, for every $r\geqslant 2$ and $k\geqslant 3$, we provide a construction of arbitrarily long permutations avoiding tight $r$-twins of length $k$. In other words, $f(r,k)=\infty$ for all pairs $(r,k)$ with $r\geqslant 2$ and $k\geqslant 3$ (Propositions \ref{infty} and \ref{rinfty}). For the remaining cases we only have a quadratic lower bound $f(r,2)\geqslant r(r+5)-12$, $r\ge3$ (Proposition \ref{ttr2}).

Another, more relaxed variant of multiple twins can be defined as follows. A family of $r$ pairwise disjoint and similar blocks in a permutation  $\pi$, each of length $k$, is called \emph{block $r$-twins} of \emph{length} $k$. For example, the following permutation contains block $4$-twins of length $3$ (highlighted):
$$(14,\colorbox{Lavender}{2,1,3},18,6,10,\colorbox{green}{5,4,8},\colorbox{cyan}{15,7,17},11,13,\colorbox{yellow}{12,9,16}).$$
How long block $r$-twins must occur in every permutation of length $n$? How long block $r$-twins occur with high probability in a \emph{random} permutation of $[n]$? We answer both these questions with asymptotic precision.

Let $bt^{(r)}(n)$ denote the largest integer $k$ such that every permutation of length $n$ contains block $r$-twins of length $k$. We prove (Theorem \ref{Theorem Block Twins}) that $$bt^{(r)}(n) = (1+o(1)) \frac{\log n}{\log \log n},$$
where the term $o(1)$ hides a dependence on $r$.
We also demonstrate (Theorem \ref{Theorem Block Twins Random}) that a random permutation of length $n$ with high probability contains block $r$-twins of length $\sim\frac{r\log n}{(r-1)\log \log n}$. Both these results were first proved for $r=2$ in \cite{DGR}.

One may also ask a reverse question: given $k$ and a permutation $\pi$, for how large $r$, are there block or tight $r$-twins of length $k$ in $\pi$?
We denote these parameters by $r_{bt}^{(k)}(\pi)$ and $r_{tt}^{(k)}(\pi)$, resp.
We show, in particular, that for $n$ even, with high probability, $r_{tt}^{(2)}(\Pi_n)=n/2$, that is, a random permutation $\Pi_n$,  contains tight $n/2$-twins of length 2 (Theorem \ref{r2Pi}).

Finally, let us return to the starting point and define \emph{block-tight $r$-twins}  as block $r$-twins which are at the same time tight $r$-twins. This means that $r$ similar blocks occur in a permutation consecutively,  with no gaps in-between, as in the following example:
$$(14,18,6,\colorbox{Lavender}{2,1,3},\colorbox{green}{5,4,8},\colorbox{cyan}{15,7,17},\colorbox{yellow}{12,9,16},10,11,13).$$
For $r=2$ this notion coincides with the order repetitions discussed at the beginning.

Let $btt^{(r)}(n)$ denote the largest integer $k$ such that every permutation of length $n$ contains block-tight $r$-twins of length $k$. The result of Avgustinovich et al.~\cite{AvgustinovichKPV} mentioned above implies that $btt^{(r)}(n)=1$ for all $r\geqslant 2$. Curiously, a random permutation with high probability contains block-tight $r$-twins of length $\sim\frac{\log n}{(r-1)\log \log n}$, which asymptotically agrees with the case of tight $r$-twins (Theorem \ref{bttRandom}).

In the forthcoming sections we give proofs of the above stated results: about  $bt^{(r)}(n)$ in Section \ref{Sbt}, and about $tt^{(r)}(n)$ and $f(r,k)$ in Section \ref{Stt}. Section \ref{third} is devoted to functions  $r_{bt}^{(k)}(\pi)$ and $r_{tt}^{(k)}(\pi)$,  while all results about  the length of block, tight, and block-tight $r$-twins in random permutations are proved in Section \ref{Sr}.
The next section contains a technical probabilistic lemma, while the last one presents some open problems.

\section{Independence of  occurrences of twins}\label{Prelim} In this section we  prove a technical result which will be used in several places of the paper. It is about the conditional probabilities of occurrences of $r$-twins in a random permutation.

Let $\Pi_n$ be a random permutation chosen uniformly from the set of all $n!$ permutations of~$[n]$. For  integers $r,k\ge2$ and a family of $r$ pairwise disjoint subsets $A_1,\dots,A_r$ of $[n]$, each of size $k$, let $\cE(A_1,\dots,A_r)$ be the event that there are $r$-twins in $\Pi_n$ on positions determined by the subsets $A_1,\dots,A_r$. Then,
\begin{equation}\label{1|k!}
\PP(\cE(A_1,\dots,A_r)=1)=\frac{\binom nk\binom{n-k}k\cdots\binom{n-(r-2)k}k\cdot (n-(r-1)k)!\cdot 1}{n!}=\frac{1}{k!^{r-1}}.
\end{equation}


\begin{lemma}\label{ABCD}
	For integers $r,t,k\ge2$ let $A^{(i)}_j$, $j=1,\dots,r$, $i=1,\dots,t$, be $k$-elements subsets of~$[n]$ such that for each $i=1,\dots,t$ all sets $A_1^{(i)},\dots,A_r^{(i)}$ are pairwise disjoint and, for some $1\le s\le r$,
	$$\bigcup_{j=1}^sA_j^{(1)}\cap\bigcup_{i=2}^t\bigcup_{j=1}^rA_j^{(i)}=\emptyset.$$
	  Then, setting $\cE^{(i)}:=\cE(A_1^{(i)},\dots,A_r^{(i)})$,
	$$\PP(\cE^{(1)}\cap\cdots\cap\cE^{(t)})\le\frac1{k!^s}\PP(\cE^{(2)}\cap\cdots\cap\cE^{(t)})\qquad\mbox{for}\quad1\le s\le r-2 ,$$
	while for $s\ge r-1$,  event $\cE^{(1)}$ is mutually independent of the family of events $\{\cE^{(2)},\dots,\cE^{(t)}\}$, that is,

	 $$\PP(\cE^{(1)}\cap\cdots\cap\cE^{(t)})=\frac1{k!^{r-1}}\PP(\cE^{(2)}\cap\cdots\cap\cE^{(t)})=\PP(\cE^{(1)})\PP(\cE^{(2)}\cap\cdots\cap\cE^{(t)}).$$
	
\end{lemma}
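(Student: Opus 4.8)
The plan is to reduce everything to the single structural fact behind \eqref{1|k!}: in a uniform random permutation the order patterns induced on pairwise disjoint sets of positions are mutually independent, each uniform over the $k!$ patterns of $S_k$. First I would record this in the sharper, conditional form that I actually need. Writing $\pi|_P$ for the restriction of a permutation $\pi$ to a set of positions $P$, the claim is: for any position set $P$ and any pairwise disjoint $k$-sets $A_1,\dots,A_m$ all disjoint from $P$, conditionally on $\Pi_n|_P$ the patterns of $\Pi_n$ on $A_1,\dots,A_m$ are independent and uniform over $S_k$. This follows by exactly the count used for \eqref{1|k!}: given the values on $P$, the remaining values are placed on the remaining positions uniformly at random, and for each prescribed tuple of target patterns $(\tau_1,\dots,\tau_m)$ the number of completions is the same (choose which of the free values land on $A_1,\dots,A_m$, each such choice determining a unique compatible arrangement, then place the rest arbitrarily), independently of the $\tau_j$.

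With this in hand, set $B:=\bigcup_{i=2}^t\bigcup_{j=1}^rA_j^{(i)}$ and $D:=\bigcup_{j=s+1}^rA_j^{(1)}$, and condition on the restriction $\Pi_n|_{B\cup D}$. The hypothesis says precisely that $\bigcup_{j=1}^sA_j^{(1)}$ is disjoint from $B$, so the clean sets $A_1^{(1)},\dots,A_s^{(1)}$ are pairwise disjoint and disjoint from $B\cup D$, and the conditional fact applies to them. Moreover $\cG:=\cE^{(2)}\cap\cdots\cap\cE^{(t)}$ depends only on $\Pi_n|_B$, and the patterns $\tau_{s+1},\dots,\tau_r$ that $\Pi_n$ induces on $A_{s+1}^{(1)},\dots,A_r^{(1)}\subseteq D$ are fixed by the conditioning; hence all of these are measurable with respect to the conditioning $\sigma$-field.

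Next I would evaluate $\PP(\cE^{(1)}\mid\Pi_n|_{B\cup D})$ on a configuration lying in $\cG$. The event $\cE^{(1)}$ asks that all $r$ patterns on $A_1^{(1)},\dots,A_r^{(1)}$ coincide; under the conditioning the last $r-s$ of them are frozen to $\tau_{s+1},\dots,\tau_r$, while the first $s$, say $\sigma_1,\dots,\sigma_s$, are conditionally independent and uniform over $S_k$. For $1\le s\le r-2$ there are at least two frozen patterns: if they are not all equal the conditional probability is $0$, and otherwise it equals $\PP(\sigma_1=\cdots=\sigma_s=\tau)=(k!)^{-s}$; either way it is at most $(k!)^{-s}$. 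Averaging this bound over the configurations in $\cG$ (legitimate by the measurability above) gives $\PP(\cE^{(1)}\cap\cG)\le(k!)^{-s}\,\PP(\cG)$, the first assertion. For $s=r-1$ (and a fortiori $s=r$, when $D=\emptyset$) at most one pattern is frozen, so the conditional probability equals the constant $(k!)^{-(r-1)}$ on every configuration, whence $\PP(\cE^{(1)}\cap\cG)=(k!)^{-(r-1)}\PP(\cG)=\PP(\cE^{(1)})\PP(\cG)$ by \eqref{1|k!}. Since this conditional probability is a constant independent of $\Pi_n|_{B\cup D}$, the same averaging with $\cG$ replaced by any sub-intersection $\bigcap_{i\in I}\cE^{(i)}$, $I\subseteq\{2,\dots,t\}$, yields $\PP(\cE^{(1)}\cap\bigcap_{i\in I}\cE^{(i)})=\PP(\cE^{(1)})\PP(\bigcap_{i\in I}\cE^{(i)})$, i.e. the full mutual independence.

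I expect the only real subtlety — the hard part — to be the bookkeeping that makes the conditioning rigorous: verifying that $\cG$ and the frozen patterns $\tau_{s+1},\dots,\tau_r$ are genuinely $\Pi_n|_{B\cup D}$-measurable (so that averaging a conditional bound over $\cG$ is valid), and that the clean sets stay disjoint from everything conditioned on, which is exactly what the hypothesis on $\bigcup_{j=1}^sA_j^{(1)}$ supplies. Everything else is the routine equal-count argument establishing the conditional independence and uniformity of patterns on disjoint sets.
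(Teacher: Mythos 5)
Your proof is correct and is essentially the paper's own argument in probabilistic rather than enumerative dress: the paper fixes the permutation outside $\bigcup_{j\le s}A_j^{(1)}$ and counts completions (the quantity $N$), which is exactly the computation underlying your evaluation of $\PP\bigl(\cE^{(1)}\mid \Pi_n|_{B\cup D}\bigr)$ via conditional uniformity of patterns on disjoint position sets, with the case split at $s\le r-2$ versus $s\ge r-1$ playing the same role as the paper's ``nothing to drop'' remark. If anything, your packaging makes the last step --- that a constant conditional probability given a $\sigma$-field containing all of $\cE^{(2)},\dots,\cE^{(t)}$ yields the full mutual independence needed for the Local Lemma --- slightly more transparent.
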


\proof  Let $N:=N(A_j^{(i)}: 1\le i\le t,1\le j\le r)$ be the number of permutations of an $(n-sk)$-element set $D$ on positions in $[n]\setminus\bigcup_{j=1}^sA_j^{(1)}$, that is bijections $f:D\to[n]\setminus\bigcup_{j=1}^sA_j^{(1)}$, such that there are $(r-s)$-twins on position sets $A^{(1)}_{s+1},\dots,A^{(1)}_{r}$, as well as,
$r$-twins on position sets $A^{(i)}_{1},\dots,A^{(i)}_{r}$ for all $i=2,\dots,t$. 
Observe that
$$|\cE^{(1)}\cap\cdots\cap\cE^{(t)}|=\frac{n!N}{k!^s(n-sk)!}\quad\mbox{and}\quad|\cE^{(2)}\cap\cdots\cap\cE^{(t)}|\ge(n)_{sk}N,$$
where the  equality follows from the fact that once  the values of $\Pi_n(i)$ are fixed on $[n]\setminus\bigcup_{j=1}^sA_j^{(1)}$, the rest of $\Pi_n$ is determined by assigning $k$-element subsets to each position set $A_i^{(1)}$, $j=1,\dots,s$, while the inequality is a result of dropping the part of definition of $N$  requesting that  there are $(r-s)$-twins on position sets $A^{(1)}_{s+1},\dots,A^{(1)}_{r}$. For $s\ge r-1$, there is nothing to drop, so we have equality there.
Hence,
$$\PP(\cE^{(1)}\cap\cdots\cap\cE^{(t)})=\frac{n!N}{k!^s(n-sk)!n!}\le\frac{|\cE^{(2)}\cap\cdots\cap\cE^{(t)}|}{k!^s(n)_{sk}(n-sk)!}=
\frac1{k!^s}\PP(\cE^{(2)}\cap\cdots\cap\cE^{(t)}),$$
where, again, for $s=r-1$, we have equality. \qed

\medskip

The first statement of Lemma \ref{ABCD} will only be used in the proof of Theorem \ref{Theorem Block Twins Random}. The second one will be applied in several proofs, whenever independence of occurrences of $r$-twins will be sought, for example, in both applications of the Local Lemma.

\section{Block twins}\label{Sbt}

  We say that a collection of disjoint, similar blocks $\{\sigma_1,\dots,\sigma_r\}$ in a permutation $\pi$ form \emph{block $r$-twins} in $\pi$. Let $bt^{(r)}(\pi)$ denote the longest length of block $r$-twins in $\pi$, that is,
$$bt^{(r)}(\pi)=\max\{\text{$|\sigma_1|:(\sigma_1,\dots,\sigma_r)$ form block $r$-twins in $\pi$}\}$$
and let $$bt^{(r)}(n)=\min\{bt^{(r)}(\pi):\text{$\pi$ is a permutation of $[n]$}\}.$$
Note that containment of block $r$-twins of length $k$ is monotone, that is, their absence in a permutation excludes both, block $r$-twins of length $k+1$ and block $r+1$-twins of length $k$.

The goal of this section is to pin-point $bt^{(r)}(n)$ asymptotically.
To this end will  need the standard Local Lemma.

For   events $\cE_{1},\ldots ,\cE_{n}$ in any probability space, \emph{a dependency graph} $D=([n],E)$ is any graph on vertex set $[n]$ such that for every vertex $i$ the event $\cE_i$ is jointly independent of all events $\cE_j$ with $ij\not\in E$.

\begin{lemma}[The Local Lemma; Symmetric Version \cite{ErdosLovasz} (see \cite{AlonSpencer})]\label{LLL Symmetric}
	Let $\cE_{1},\ldots ,\cE_{n}$ be events in any probability space. Suppose that the maximum degree of a dependency graph of these events is at most $\Delta$, and $\PP(A_i)\leqslant p$, for all $i=1,2,\dots,n$. If $ep(\Delta+1)\leqslant 1$, then $\PP \left(
	\bigcap\limits_{i=1}^{n}\overline{\cE_{i}}\right) >0$.
\end{lemma}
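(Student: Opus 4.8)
The plan is to deduce this symmetric statement from the general, asymmetric form of the Local Lemma, which I would prove first. In the general form one assumes that for each $i$ there is a real number $x_i\in[0,1)$ with $\PP(\cE_i)\le x_i\prod_{j:\,ij\in E}(1-x_j)$, and concludes that $\PP\big(\bigcap_{i=1}^n\overline{\cE_i}\big)\ge\prod_{i=1}^n(1-x_i)>0$. The heart of the matter is the auxiliary claim that for every index $i$ and every set $S\subseteq[n]\setminus\{i\}$ one has
$$\PP\Big(\cE_i\ \Big|\ \bigcap_{j\in S}\overline{\cE_j}\Big)\le x_i,$$
which I would establish by induction on $|S|$, the base case $S=\emptyset$ being immediate from the hypothesis since $\prod_j(1-x_j)\le1$.

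For the inductive step I would split $S=S_1\dcup S_2$, where $S_1=\{j\in S:ij\in E\}$ collects the neighbours of $i$ in the dependency graph and $S_2=S\setminus S_1$ the non-neighbours, and write the conditional probability as a ratio
$$\PP\Big(\cE_i\ \Big|\ \bigcap_{j\in S}\overline{\cE_j}\Big)=\frac{\PP\big(\cE_i\cap\bigcap_{j\in S_1}\overline{\cE_j}\ \big|\ \bigcap_{k\in S_2}\overline{\cE_k}\big)}{\PP\big(\bigcap_{j\in S_1}\overline{\cE_j}\ \big|\ \bigcap_{k\in S_2}\overline{\cE_k}\big)}.$$
I would bound the numerator from above by $\PP\big(\cE_i\mid\bigcap_{k\in S_2}\overline{\cE_k}\big)=\PP(\cE_i)\le x_i\prod_{j:\,ij\in E}(1-x_j)$, using that $\cE_i$ is jointly independent of the non-neighbour family $\{\cE_k:k\in S_2\}$. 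For the denominator I would expand it as a product via the chain rule and bound each factor from below by $1-x_j$ through the inductive hypothesis, each conditioning set appearing there being strictly smaller than $S$; this yields at least $\prod_{j\in S_1}(1-x_j)\ge\prod_{j:\,ij\in E}(1-x_j)$. Dividing gives the claimed bound $\le x_i$. A final telescoping of the chain rule then delivers $\PP\big(\bigcap_i\overline{\cE_i}\big)=\prod_i\PP\big(\overline{\cE_i}\mid\bigcap_{l<i}\overline{\cE_l}\big)\ge\prod_i(1-x_i)>0$.

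To recover the symmetric version I would set $x_i=\tfrac{1}{\Delta+1}$ for all $i$. Since each vertex has at most $\Delta$ neighbours and $1-x_i<1$, the hypothesis of the general form reduces to checking $p\le\tfrac{1}{\Delta+1}\big(1-\tfrac{1}{\Delta+1}\big)^{\Delta}$. The elementary inequality $\big(1-\tfrac{1}{\Delta+1}\big)^{\Delta}\ge e^{-1}$ shows that the right-hand side is at least $\tfrac{1}{e(\Delta+1)}$, so the assumption $ep(\Delta+1)\le1$ is exactly what is needed, and the desired conclusion follows at once.

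I expect the main obstacle to be the auxiliary claim: setting up the induction so that the decomposition $S=S_1\dcup S_2$ meshes correctly with the independence structure, and in particular verifying that every conditioning event produced by the chain-rule expansion of the denominator is indexed by a set of size strictly less than $|S|$, so that the inductive hypothesis genuinely applies. Everything after the claim, including the passage to the symmetric form, is routine bookkeeping.
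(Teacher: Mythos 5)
The paper states this lemma without proof, quoting it from the literature and deferring to \cite{ErdosLovasz} and \cite{AlonSpencer}, so there is no in-paper argument to compare against. Your proposal is the standard and correct derivation found in those references: the inductive claim $\PP\bigl(\cE_i \mid \bigcap_{j\in S}\overline{\cE_j}\bigr)\le x_i$ via the neighbour/non-neighbour split of $S$ (with the chain-rule factors of the denominator each handled by the inductive hypothesis on strictly smaller conditioning sets), the final telescoping product, and the specialization $x_i=\tfrac{1}{\Delta+1}$ combined with $\bigl(1-\tfrac{1}{\Delta+1}\bigr)^{\Delta}\ge e^{-1}$, which turns the hypothesis $ep(\Delta+1)\le 1$ into exactly what the asymmetric form requires.
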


The following result gives an asymptotic formula for the function $bt^{(r)}(n)$. The term $o(1)$ depends on $r$.

\begin{thm}\label{Theorem Block Twins}
	We have
	\[
	bt^{(r)}(n) = (1+o(1)) \frac{\log n}{\log \log n}.
	\]
\end{thm}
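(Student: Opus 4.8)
The plan is to prove matching lower and upper bounds, each equal to $(1+o(1))\frac{\log n}{\log\log n}$, with the $o(1)$ absorbing a dependence on $r$.

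For the \emph{lower bound} I would argue by a pigeonhole that is essentially insensitive to $r$. Given any permutation $\pi$ of $[n]$, partition the positions into $m=\lfloor n/k\rfloor$ consecutive blocks of length $k$. Each such block realises one of the $k!$ possible patterns, so as soon as $m\ge(r-1)k!+1$ some pattern is shared by at least $r$ of these pairwise disjoint blocks, and those $r$ blocks are exactly block $r$-twins of length $k$. Since $k\cdot k!=\exp\!\big((1+o(1))k\log k\big)$, the inequality $\lfloor n/k\rfloor\ge(r-1)k!+1$ holds for every $k\le(1-\epsilon)\frac{\log n}{\log\log n}$ once $n$ is large, and inverting $k\log k\approx\log n$ gives $bt^{(r)}(n)\ge(1-o(1))\frac{\log n}{\log\log n}$.

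The \emph{upper bound} is the harder half, because a first-moment estimate is \emph{not} tight here: there are about $n^{r}$ candidate $r$-families, so a union bound forces $k!^{\,r-1}\gtrsim n^{r}$, i.e. $k\gtrsim\frac{r}{r-1}\frac{\log n}{\log\log n}$, and indeed a uniformly random permutation typically does contain block $r$-twins of that length (Theorem~\ref{Theorem Block Twins Random}), a constant factor longer than the minimum. To recover the sharp constant $1$ I would instead invoke the symmetric Local Lemma (Lemma~\ref{LLL Symmetric}). Fix $k=\lceil(1+\epsilon)\frac{\log n}{\log\log n}\rceil$, take $\Pi_n$ uniformly at random, and for every family $\cF$ of $r$ pairwise disjoint length-$k$ intervals introduce the bad event $\cE_\cF$ that the corresponding blocks of $\Pi_n$ form $r$-twins; by \eqref{1|k!} we have $\PP(\cE_\cF)=k!^{-(r-1)}=:p$. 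The crucial choice is the dependency graph: join $\cE_\cF$ and $\cE_{\cF'}$ precisely when the position sets of $\cF$ and $\cF'$ intersect. That this graph is valid — that $\cE_\cF$ is jointly independent of all bad events whose position sets are disjoint from that of $\cF$ — is exactly the mutual-independence assertion ($s\ge r-1$) of Lemma~\ref{ABCD}, which is the reason that lemma was proved. For the maximum degree: a fixed $\cF$ occupies $rk$ positions, a length-$k$ interval meeting them can start in at most $rk^2$ places, and the remaining $r-1$ intervals of a neighbouring family range over at most $n^{r-1}$ choices, so $\Delta=O_r(k^2 n^{r-1})$. The condition $ep(\Delta+1)\le1$ then reads $k!^{\,r-1}\ge e(\Delta+1)=O_r(k^2n^{r-1})$; after taking logarithms the factor $r-1$ cancels and the requirement collapses to $k\log k\ge(1+o(1))\log n$, which holds for our $k$. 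Hence some permutation has no block $r$-twins of length $k$, and by the monotonicity of block-twin containment none of length $\ge k$, so $bt^{(r)}(n)<k=(1+o(1))\frac{\log n}{\log\log n}$.

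I expect the main obstacle to be twofold. First, one must recognise that the Local Lemma, not a union bound, is needed for the sharp leading constant: the decisive point is that the dependency degree is only $O_r(n^{r-1})$ rather than the total count $\approx n^{r}$ of bad events, and this single saved power of $n$ is exactly what cancels the $r-1$ in the exponent of $k!$ and removes the spurious factor $\tfrac{r}{r-1}$. Second, one must justify that disjointness of position sets really yields genuine independence of the twin-events for a \emph{random permutation}, which is delicate because of the global constraints of a permutation; this is precisely what Lemma~\ref{ABCD} supplies, so the whole argument hinges on invoking it correctly to certify the dependency graph.
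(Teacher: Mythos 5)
Your proposal is correct and follows essentially the same route as the paper: a pigeonhole partition into length-$k$ blocks for the lower bound, and the symmetric Local Lemma certified by Lemma~\ref{ABCD} for the upper bound, with the key point in both being that the dependency degree is $O_r(k\,n^{r-1})$ rather than $n^{r}$. The only cosmetic differences are that the paper defines $n$ in terms of $k$ (namely $n=\lfloor k!(erk)^{-1/(r-1)}\rfloor$) rather than fixing $k=\lceil(1+\epsilon)\log n/\log\log n\rceil$, and it uses the $s=r-1$ case of Lemma~\ref{ABCD} (disjointness of only the first $r-1$ intervals) to get a marginally smaller $\Delta$, neither of which affects the asymptotics.
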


\begin{proof}
	First we show the lower bound. Let $n = k((r-1)k! +1)$ and let $\pi$ be any permutation of~$[n]$. Divide $\pi$ into $(r-1)k!+1$ blocks, each of length $k$. By the pigeonhole principle, there are $r$ blocks that induce similar sub-permutations (forming thereby  $r$-twins). The choice of $n$, together with the Stirling formula, imply that $k = (1+o(1))\frac{\log n}{\log \log n}$.
	
	For the upper bound we use the probabilistic method based upon Lemma \ref{LLL Symmetric} and Lemma~\ref{ABCD}. Let $n = \left\lfloor k!(erk)^{-1/(r-1)}\right\rfloor$ and let $\Pi:=\Pi_n$ be a random permutation.
	An $r$-tuple of indices $i_1,\dots,i_r$ satisfying
	$$1\le i_1\le i_2-k\le i_3-2k\cdots \le i_r-(r-1)k\le n-rk,$$
	is called \emph{$k$-spread}. For a $k$-spread $r$-tuple define the event $\cE_{i_1,\dots,i_r}$ that segments $(\Pi(i_j),\Pi(i_j+1)),\dots,\Pi(i_j+k-1))$, $j=1,\dots,r$,  form block $r$-twins in $\Pi$.
We are going to apply Lemma \ref{LLL Symmetric} to events $\cE_{i_1,\dots,i_r}$ over all choices of $k$-spread $r$-tuples $i_1,\dots,i_r$.

By \eqref{1|k!}, we may set $p:=\PP(\cE_{i_1,\dots,i_r})=1/k!^{r-1}$.
	Notice that by Lemma \ref{ABCD}, case $s=r-1$, a fixed event $\cE_{i_1,\dots,i_r}$  is jointly independent of all the events  $\cE_{i_1',\dots,i'_r}$ for which
	$$\bigcup_{j=1}^{r-1}\{i_j,i_j+1,\dots,i_j+k-1\}\cap\bigcup_{j=1}^r\{i'_j,i_j'+1,\dots,i_j'+k-1\}=\emptyset.$$
	Thus, there is a dependency graph $D$ for these events with maximum degree at most
	\[
	\Delta = (r-1)kn^{r-1} \le rkn^{r-1}-1.
	\]
	This and the choice of $n$ yields that
	\[
	e(\Delta+1)p \le e \cdot rkn^{r-1} \cdot \frac{1}{k!^{r-1}} \le 1.
	\]
	Consequently,  Lemma \ref{LLL Symmetric} implies that there exists a permutation $\pi$ of $[n]$ with no block $r$-twins of length $k$, that is with $bt^{(r)}(\pi)<k$. In turn, $bt^{(r)}(n)\le bt^{(r)}(\pi)<k$. Again, the Stirling formula yields that $k = (1+o(1))\frac{\log n}{\log \log n}$.
\end{proof}

\section{Tight twins}\label{Stt}
In this section we consider  $r$-twins whose union occupies a block of consecutive positions in a permutation $\pi$. We call them \emph{tight $r$-twins}. Note that, unlike block twins, tight twins are not `monotone', that is the absence of tight $r$-twins of length $k$ in a permutation does not exclude the presence of longer tight $r$-twins. Likewise, it does not exclude the presence of tight $(r+1)$-twins of length $k$.

\subsection{Upper bound}

Let $tt^{(r)}(\pi)$ denote the maximum length of tight $r$-twins in $\pi$, that is,
$$tt^{(r)}(\pi)=\max\{\text{$|\sigma_1|:(\sigma_1,\dots,\sigma_r)$ is a collection of tight twins in $\pi$}\},$$
and let $$tt^{(r)}(n)=\min\{tt^{(r)}(\pi):\text{$\pi$ is a permutation of $[n]$}\}.$$

We will prove that for every fixed $r$ there is a constant $c=c(r)$ such that $tt^{(r)}(n)\leqslant c$ for all $n$.
We intend to apply again the probabilistic method.
However, due to the lack of monotonicity, in order to show that $tt^{(r)}(n)<k$, we need to find a permutation $\pi$ without $r$-twins of any length $m\ge k$. To this end, the most suitable tool seems to be  the following version of the Local Lemma, which is equivalent to the standard asymmetric version (see \cite{AlonSpencer}). The dependency graph was defined in Section \ref{Sbt}.

\begin{lemma}[The Local Lemma; Multiple Version (see \cite{AlonSpencer})]\label{LLL}
	Let $\cE_{1},\ldots ,\cE_{n}$ be events in any probability space with a dependency
	graph $D=(V,E)$. Let $V=V_{1}\cup \cdots \cup V_{t}$ be a partition such
	that all members of each part $V_{k}$ have the same probability $p_{k}$.
	Suppose that the maximum number of vertices from $V_{m}$ adjacent to a
	vertex from $V_{k}$ is at most $\Delta _{km}$. If there exist real numbers $%
	0\leq x_{1},\ldots ,x_{t}<1$ such that $p_{k}\leq
	x_{k}\prod\limits_{m=1}^{t}(1-x_{m})^{\Delta _{km}}$, then $\Pr \left(
	\bigcap\limits_{i=1}^{n}\overline{\cE_{i}}\right) >0$.
\end{lemma}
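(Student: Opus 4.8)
The plan is to deduce this \emph{Multiple Version} from the standard asymmetric Local Lemma, so no new probabilistic argument is needed. Recall that the asymmetric version guarantees the following: if to each event $\cE_i$ one can attach a weight $y_i\in[0,1)$ such that $\PP(\cE_i)\le y_i\prod_{ij\in E}(1-y_j)$, then $\PP\left(\bigcap_{i=1}^n\overline{\cE_i}\right)\ge\prod_{i=1}^n(1-y_i)>0$. Since the hypotheses in the statement are organized by the parts $V_1,\dots,V_t$ rather than by individual events, the entire task is to convert the block data $(p_k,x_k,\Delta_{km})$ into per-event weights $y_i$ satisfying the asymmetric criterion, after which the asymmetric version is invoked as a black box.

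First I would set the weights uniformly within parts: for every index $i$ lying in part $V_k$, put $y_i:=x_k$. Fix such an $i\in V_k$ and split its neighborhood in the dependency graph $D$ according to the parts, writing $d_{im}$ for the number of neighbors of $i$ that belong to $V_m$. By the definition of $\Delta_{km}$ as the maximum number of $V_m$-vertices adjacent to a $V_k$-vertex, we have $d_{im}\le\Delta_{km}$ for every $m$. Because all neighbors lying in $V_m$ carry the common weight $x_m$, the product over the neighborhood factors cleanly as
$$\prod_{ij\in E}(1-y_j)=\prod_{m=1}^t(1-x_m)^{d_{im}}.$$

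Now comes the one point that needs a moment's care, and it is essentially the whole content of the reduction: since $0\le x_m<1$ we have $0<1-x_m\le1$, so raising $1-x_m$ to a \emph{smaller} exponent can only \emph{increase} its value. From $d_{im}\le\Delta_{km}$ we therefore obtain $(1-x_m)^{d_{im}}\ge(1-x_m)^{\Delta_{km}}$ for each $m$, whence
$$y_i\prod_{ij\in E}(1-y_j)=x_k\prod_{m=1}^t(1-x_m)^{d_{im}}\ge x_k\prod_{m=1}^t(1-x_m)^{\Delta_{km}}\ge p_k=\PP(\cE_i),$$
the last inequality being exactly the hypothesis of the lemma. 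Thus the weights $y_i$ verify the asymmetric criterion for every $i$, and the asymmetric Local Lemma yields $\PP\left(\bigcap_{i=1}^n\overline{\cE_i}\right)>0$, as claimed.

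I do not expect any genuine obstacle here: the result is a known repackaging of the asymmetric Local Lemma, and the only substantive step is the monotonicity of $(1-x_m)^{\,\cdot\,}$ used to pass from the actual part-degrees $d_{im}$ to the upper bounds $\Delta_{km}$. Everything else is bookkeeping — the grouping of neighbors by part and the factorization of the product — and no estimate beyond the cited asymmetric version is invoked.
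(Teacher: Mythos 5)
Your reduction is correct, and it matches what the paper intends: the lemma is stated there without proof as a known repackaging of the asymmetric Local Lemma from \cite{AlonSpencer}, and the standard derivation is exactly your argument --- set $y_i=x_k$ for $i\in V_k$, factor the neighborhood product by parts, and use that $(1-x_m)^{d}$ is nonincreasing in $d$ to pass from the actual part-degrees to the bounds $\Delta_{km}$. No gaps.
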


Equipped with this tool, we may now prove the main result of this section.

\begin{thm}\label{Theorem Tight Twins}
	For every $n\geqslant 1$ and $r\ge3$ we have $tt^{(r)}(n)\leqslant 15r$.
\end{thm}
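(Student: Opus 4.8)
The plan is to run the probabilistic method through the multiple version of the Local Lemma (Lemma \ref{LLL}), with the events indexed by both the starting position and the length of a candidate block. Fix $r\ge3$, set the threshold $k:=15r+1$, and let $\Pi:=\Pi_n$ be a uniform random permutation of $[n]$. If $n<rk$ there is no room for tight $r$-twins of length $\ge k$, so the bound is vacuous; assume $n\ge rk$. For each length $m$ with $k\le m\le\lfloor n/r\rfloor$ and each start $a$ with $1\le a\le n-rm+1$, let $\cE_{a,m}$ be the event that the block of $\Pi$ on the interval $J_{a,m}:=\{a,a+1,\dots,a+rm-1\}$ forms tight $r$-twins of length $m$. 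Any permutation avoided by all these events has no tight $r$-twins of length exceeding $15r$, so it suffices to prove $\PP\bigl(\bigcap_{a,m}\overline{\cE_{a,m}}\bigr)>0$.

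The first step is to bound $p_m:=\PP(\cE_{a,m})$, and this is where the constant $15$ is pinned down. Writing $\cE_{a,m}$ as the union, over the $\tfrac{(rm)!}{r!\,(m!)^r}$ unordered partitions of $J_{a,m}$ into $r$ groups of size $m$, of the event that the fixed partition yields $r$-twins, a union bound together with \eqref{1|k!} (applied with $m$ in place of $k$) gives $p_m\le\frac{(rm)!}{r!\,(m!)^{2r-1}}\le\frac1{r!}\alpha_m^m$, where $\alpha_m:=r^r e^{r-1}/m^{r-1}$ and I used $\frac{(rm)!}{(m!)^r}\le r^{rm}$ together with $m!\ge(m/e)^m$. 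Since $\alpha_m$ is decreasing in $m$ and $\alpha_{15r}=r(e/15)^{r-1}$ is maximized over $r\ge3$ at $r=3$, one gets $\alpha_m\le\alpha_{15r}<1/10$ for all $m\ge15r$ and all $r\ge3$; thus $p_m\le\frac1{r!}\alpha_k^m$ decays geometrically with a comfortably small ratio. For the dependency structure I would note that $\cE_{a,m}$ is measurable with respect to the relative order of $\Pi$ on $J_{a,m}$, and that relative orders on disjoint intervals are mutually independent (a standard property of uniform random permutations, underlying Lemma \ref{ABCD}); hence joining $\cE_{a,m}$ to $\cE_{a',m'}$ exactly when $J_{a,m}\cap J_{a',m'}\ne\emptyset$ gives a valid dependency graph. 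Partitioning by length, $V_m=\{\cE_{a,m}:a\}$, a length-$\ell$ interval meets at most $r\ell+rm-1\le r(\ell+m)$ of the length-$m$ intervals, so $\Delta_{\ell,m}\le r(\ell+m)$.

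I expect the main obstacle to be the Local Lemma verification itself, because long blocks have enormous degree: a length-$\ell$ interval overlaps on the order of $r\ell$ intervals of every length, so $\sum_m\Delta_{\ell,m}$ grows with $\ell$ and a uniform choice $x_\ell\asymp p_\ell$ cannot satisfy the hypothesis for $\ell$ as large as $n/r$. The remedy is to let the weights decay only at the (slower) rate of the $p_\ell$-bound rather than of $p_\ell$ itself. Using $\log(1-x)\ge-2x$ for $x\le1/2$, the required inequality $p_\ell\le x_\ell\prod_m(1-x_m)^{\Delta_{\ell,m}}$ is implied by $x_\ell\ge p_\ell\exp(2rS_0\ell+2rS_1)$, where $S_0:=\sum_m x_m$ and $S_1:=\sum_m m\,x_m$. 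Setting $\beta:=\alpha_k e^{2rS_0}$ and $A:=e^{2rS_1}/r!$ makes the geometric profile $x_\ell=A\beta^\ell$ a fixed point of this demand, precisely because $p_\ell\le\frac1{r!}\alpha_\ell^\ell\le\frac1{r!}\alpha_k^\ell$ by monotonicity of $\alpha$.

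The final step is to check that this fixed point is self-consistent and that every weight stays below $1$, uniformly in $n$. Since $\beta^k\le(1/10)^{15r+1}$ is astronomically small, the finite sums $S_0\le A\beta^k/(1-\beta)$ and $S_1$ are negligible, so $\beta=\alpha_k e^{2rS_0}<1/10$, $A=e^{2rS_1}/r!<1/2$, and $x_\ell\le A<1/2$ for every admissible $\ell$ and for all $n$. This confirms the hypothesis of Lemma \ref{LLL}, so with positive probability $\Pi$ avoids all the events $\cE_{a,m}$; such a permutation has $tt^{(r)}(\pi)\le15r$, and therefore $tt^{(r)}(n)\le15r$. The only quantitative inputs are the geometric decay of $p_m$ for $m\ge15r$ and the elementary estimate $\alpha_{15r}<1/10$, so the constant $15$ is a safe (non-optimal) choice leaving ample slack in the Local Lemma.
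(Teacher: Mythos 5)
Your proposal is correct and follows essentially the same route as the paper's proof: the multiple version of the Local Lemma applied to the events that a window of length $rm$ carries tight $r$-twins of length $m$, the union bound $p_m\le (rm)!/\bigl(r!\,(m!)^{2r-1}\bigr)$, the interval-intersection dependency graph with $\Delta_{\ell,m}\le r(\ell+m)$, and the estimate $1-x\ge e^{-2x}$. The only differences are cosmetic: the paper takes $x_m=2^{-m}$ and verifies the resulting inequality directly for all $k\ge 15r$ and $r\ge 3$, whereas you take a geometric weight with ratio roughly $\alpha_k<1/10$ via a slightly circular fixed-point choice of $(\beta,A)$ that should be regularized (e.g., by fixing concrete weights such as $x_\ell=(1/9)^\ell/r!$ and checking the same bounds), but the underlying estimates are identical.
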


\begin{proof}
	Let $\Pi$ be a random permutation of $[n]$. We will apply Lemma \ref{LLL} in the following setting. For a fixed block $K$ of length $rk$, $k\ge cr$ (where $c=c(r)$ will be specified later), let $\cA_K$ denote the event that a sub-permutation of $\Pi$ occupying $K$ consists of tight $r$-twins.  Let $V_k$ denote the collection of all such events $\cA_K$ for all possible blocks $K$ of length $rk$. Note that
$\cA_K=\bigcup_{K_1,\dots,K_r}\cE(K_1,\dots,K_r)$, where the union extends over all partitions of $K$ into $r$ disjoint subsets of size $k$ and $\cE(K_1,\dots,K_r)$ is the event defined prior to Lemma \ref{ABCD}.
Thus, by (\ref{1|k!}) and the union bound, for every $\cA_K\in V_k$,
	$$\PP(\cA_K)\le\sum_{K_1,\dots,K_r}\PP(\cE({K_1,\dots,K_r}))=\frac{1}{r!}\binom{rk}{k}\binom{(r-1)}{k}\cdots\binom{2k}{k}\cdot \frac{1}{(k!)^{r-1}}=\frac{1}{r!}\cdot \frac{(rk)!}{(k!)^{2r-1}}.$$ Hence, we may take $p_k=\frac{(rk)!}{r!(k!)^{2r-1}}$.
	
By Lemma \ref{ABCD}, case $s=r$, any event	$\cE({K_1,\dots,K_r})$ is mutually independent of all events $\cE({M_1,\dots,M_r})$ such that $\bigcup_{i=1}^r K_i\cap\bigcup_{i=1}^r M_i=\emptyset$. In turn,
Any event $\cA_K$ depends only on those events $\cA_M$ whose blocks $M$ intersect~$K$. Hence, if $M$ is any block of length $rm$, with $m\geqslant c$ and $M\neq K$, then we may take $\Delta_{km}=rk+rm-1$. Furthermore, we take $x_m=q^m$, where $q=q(r)\leqslant 1$ is a constant (to be specified later) such that $q^m\leqslant 1/2$ for $m\geqslant c$.
	
	We are going to prove that for every $k\geqslant c$,
	\[p_{k}\leq	x_{k}\prod\limits_{m=c}^{n/r}(1-x_{m})^{\Delta _{km}}.\]
	Since $x_m\leqslant1/2$ for $m\ge c$, we may use the inequality $1-x_m\geqslant e^{-2x_m}$ and obtain the bound
	
	\begin{align*}
	\prod\limits_{m=c}^{n/r}(1-x_{m})^{\Delta _{km}} \geqslant\prod\limits_{m=c}^{n/r}(1-x_m)^{r(k+m)}
	&\geqslant \exp\left( -2r\sum_{m=c}^{\infty}x_m(k+m)\right)\\
	&=\exp\left( -2rk\sum_{m=c}^{\infty}q^m\right) \cdot \exp\left(-2r\sum_{m=c}^{\infty}mq^m\right).
	\end{align*}
	Since $\sum_{m=c}^{\infty}q^m=\frac{q^c}{1-q}=:A$ and $\sum_{m=c}^{\infty}mq^m=\frac{q^c(-qc+q+c)}{(1-q)^2}=\frac{q^c}{1-q}\cdot\left (c+ \frac{q}{1-q}\right)= :B$, we will be done by showing that the following inequality holds for all $k\geqslant c$,
	$$\frac{(rk)!}{r!(k!)^{2r-1}}\leqslant \frac{q^k}{e^{2rkA}\cdot e^{2rB}}.$$
	It is not hard to see that when $r$ is fixed and $q=1/2$ (for instance), the inequality holds for sufficiently large $k$. Let us make more precise calculations to derive the dependence of $c$ on $r$.
	
	First we bound the left-hand side by using the well known bounds based on the Stirling formula, $n^ne^{-n} \sqrt{2\pi n} \le n! \le n^ne^{-n+1} \sqrt{n}$, which are valid for all positive integers $n$. Thus, we obtain
	\begin{equation*}
	\frac{(rk)!}{r!(k!)^{2r-1}}\leqslant \frac{(rk)^{rk}\cdot e \cdot \sqrt{rk}}{e^{rk}}\cdot\frac{e^{k(2r-1)}\sqrt{2\pi k}}{r!\cdot k^{k(2r-1)}\cdot(2\pi)^r\cdot k^r},
	\end{equation*}
	which simplifies to
	\begin{equation*}
	\frac{(rk)!}{r!(k!)^{2r-1}}\leqslant \frac{e\cdot\sqrt{2\pi r}}{r!\cdot (2\pi)^r}\cdot \frac{(r^r)^{k}\cdot e^{k(r-1)}}{k^{(k+1)(r-1)}}< (r^r)^k\cdot \left( \frac{e^k}{k^{(k+1)}}\right)^{r-1},
	\end{equation*}
	for $r\geqslant 2$. On the other hand, for $q=1/2$ we have $A=\frac{2}{2^c}$ and $B=\frac{2}{2^c}(c+1)$.  So, we will be done by showing that
	\begin{equation}\label{eq:ineq1}
	(r^r)^k\cdot (e^{r-1})^k\cdot (e^{4r/2^c})^k\cdot e^{{4r(c+1)/2^c}}\cdot2^k<k^{(k+1)(r-1)}
	\end{equation}
	holds for all $k\geqslant c$. Set $c=15r$.

	
	Since  $4r/2^c \le 1$ and ${4r(c+1)/2^c} \le 1$, we get
	\[
	(e^{4r/2^c})^k\le e^k \quad \text{ and } \quad e^{{4r(c+1)/2^c}}\le e.
	\]
	Thus, \eqref{eq:ineq1} will follow from
	\[
	(er)^{rk} e^{k+1} < k^{(k+1)(r-1)}.
	\]
	Finally, since $k\ge c = 15r \ge 2e^2 r$, we get
	\[
	k^{(k+1)(r-1)} \ge c^{(k+1)(r-1)} = (2r)^{(k+1)(r-1)} e^{2(k+1)(r-1)}
	\]
	and so it remains to show that
	$(2r)^{(k+1)(r-1)} \ge r^{rk}$ and $e^{2(k+1)(r-1)} \ge e^{rk+k+1}$. Observe that the first inequality is equivalent to $2^{(k+1)(r-1)} \ge r^{k-r+1}$, which holds since for $r\ge 3$,
	\[
	2^{(k+1)(r-1)} \ge 2^{kr/2} = \left(2^{r/2}\right)^k \ge r^k \ge r^{k-r+1}.
	\]
	The second inequality is equivalent to $k(r-3)+2r-3\ge 0$, which clearly holds, too.
	
\end{proof}
Note that the above defined $p_k$ is greater than $(r/ek)^{rk}r^{-r}\ge1$ for $2\le k\le r/(2e)$, $r$ large, so the  bound in Theorem \ref{Theorem Tight Twins} cannot be improved to $o(r)$ by this method (see Problems \ref{P61} and \ref{P62} in Section \ref{cr}).

\subsection{Function $f(r,k)$, or problem turned around}\label{around}

Here we put the cart before the horse and consider the following extremal problem.
Given integers $r,k\ge2$, determine the function
$$f(r,k)=\min\{n: \text{ every $\pi_n$ contains tight $r$-twins of length $k$}\}.$$
If no such $n$ exists, then we set $f(r,k)=\infty$. Note that $f$ is not monotone in either variable, that is, in general, it is not true that $f(r,k)\le f(r+1,k)$ or $f(r,k)\le f(r,k+1)$.
For example, permutation
$$\pi_{12}=(4, 5, 6, 9, 8, 7, 1, 2, 3, 12, 11, 10)$$
contains no tight 2-twins of length 3 (see the proof of Prop. \ref{infty} for explanation) but it does contain tight 3-twins of length 3, namely $(4,9,1)$, $(5, 8, 2)$, and $(6, 7, 3)$, all three similar to $(2,3,1)$. Also, $\pi_{12}$ does contain tight 2-twins of length 6, namely  $(4, 5, 6, 9, 8, 7)$ and $(1, 2, 3, 12, 11, 10)$, both similar to $(1,2,3,6,5,4)$. Owing to this inconvenience, there is no obvious relation between functions $f(r,k)$ and $tt^{(r)}(n)$.

As we show below in Propositions~\ref{infty} and \ref{rinfty}, quite surprisingly, $f(r,k)=\infty$ for any $r\ge2$ and $k\ge 3$. For  clarity of presentation we chose to first prove the case $r=2$, so that the general case will be easier to comprehend. For a sequence of integers $A=(a_1,\dots,a_m)$, set $-A=(-a_1,\dots,-a_m)$ and  $\overleftarrow{A}=(a_m,\dots,a_1)$ for the opposite and, resp., the inverse sequence to $A$.

\begin{prop}\label{infty} For all $k\ge3$, we have
	$f(2,k)=\infty$.
\end{prop}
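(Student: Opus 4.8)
The plan is to prove $f(2,k)=\infty$ by exhibiting, for each fixed $k\ge 3$, an explicit family of arbitrarily long permutations containing no tight $2$-twins of length \emph{exactly} $k$. I would build these permutations from monotone runs of length $k$ with alternating directions, generalizing the example $\pi_{12}$; note that the two length-$6$ halves of $\pi_{12}$, namely $(4,5,6,9,8,7)$ and $(1,2,3,12,11,10)$, are order-isomorphic, which signals a self-similar construction. Concretely, with $A=(1,2,\dots,k)$, I take as seed the length-$2k$ permutation consisting of the increasing run $A$ followed by a decreasing run on the top $k$ values (namely $(2k,2k-1,\dots,k+1)$), and then define a doubling step which, from a permutation $\pi$ on $[m]$, produces one on $[2m]$ by placing two copies of $\pi$ into interleaved value-ranges, using the reversal $\overleftarrow{(\cdot)}$ and negation $-(\cdot)$ to flip run directions and value blocks. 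Iterating this step yields permutations of length $2^{t}k$ for every $t$, so arbitrarily long examples exist once avoidance is established.

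Next I would reduce the claim to a purely local assertion: a permutation contains tight $2$-twins of length $k$ if and only if some window of $2k$ consecutive positions can be partitioned into two order-isomorphic subsequences of length $k$. In the constructed permutations every maximal monotone run has length exactly $k$, so any such window meets the run structure in one of two ways: it covers two full adjacent runs (one increasing, one decreasing), or it covers a proper suffix of one run, one full run, and a proper prefix of the next. The core tool is a structural lemma on similar subsequences: if $\sigma_1\sim\sigma_2$ then they have identical descent words in $\{<,>\}$, and, since the window is piecewise monotone, each $\sigma_i$ inherits a piecewise-monotone pattern dictated by the window (unimodal, i.e.\ increasing then decreasing, in the two-full-run case), with the location of its peak determined by how many elements it draws from each run. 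Matching the two descent words forces a symmetric split of each run, which in turn pins down the value-ranges that the two subsequences must occupy.

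I would then run the case analysis on windows, using the interleaved value-ranges to contradict the symmetric split dictated by the lemma. For a window covering two full runs $U=(u_1<\dots<u_k)$ and $D=(d_1>\dots>d_k)$ with the $d$'s cleanly above the $u$'s, the peak-matching condition forces a balanced split with $k/2$ elements drawn from each run, so no twin exists when $k$ is odd; for boundary-crossing windows the self-similar value pattern breaks the required peak symmetry. Finally, windows lying entirely inside one recursive copy are handled by induction on $t$, the base case being the seed of length $2k$.

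The main obstacle is the even-$k$ case: the clean ``increasing run then decreasing run'' seed of length $2k$ \emph{is} a tight $2$-twin when $k$ is even, since the balanced split with $a=b=k/2$ produces two similar unimodal halves. The fix---and the delicate part of the proof---is to arrange the value-ranges of adjacent runs so that they interleave rather than separate cleanly, forcing even the balanced split to yield two subsequences with \emph{different} patterns. Verifying that a single such interleaving simultaneously defeats every boundary-crossing window, uniformly for all $k\ge 3$ and all stages $t$ of the recursion, is where the real work lies.
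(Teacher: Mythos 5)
Your overall strategy (alternating monotone runs, reduction to windows of length $2k$, and a descent-word analysis of how two similar subsequences can split the runs) is in the same spirit as the paper's argument, but there is a genuine unresolved gap exactly where you flag it: the even-$k$ case. With runs of length $k$, the window consisting of one full increasing run followed by one full decreasing run admits the balanced split $a=k/2$, which really does produce tight $2$-twins, and your proposed repair (interleaving the value ranges of adjacent runs) is never carried out --- you yourself defer ``the real work'' of checking that one interleaving defeats every boundary-crossing window at every stage of the recursion. As it stands the argument only has a chance for odd $k$, and even there the boundary-crossing windows, and the windows straddling the two halves of a doubling step, are asserted rather than analyzed.

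The paper closes this hole by a different choice of parameters rather than by interleaving: it builds a single, non-recursive permutation from alternating monotone runs of \emph{odd} length $2k-1$, each run occupying a value range entirely above or entirely below all previous ones, and shows that this one permutation contains no tight $2$-twins of length $2k-1$ \emph{nor} of length $2k$. Since every $\ell\ge 3$ equals $2k-1$ or $2k$ for some $k\ge 2$, this gives $f(2,\ell)=\infty$ for all $\ell\ge 3$. The oddness of the run length is what makes the parity argument bite: in any window of length $4k-2$ or $4k$ the leftover segments at the two ends have odd length, so one of the two putative twins must begin (or end) with a strictly longer increasing segment than the other, contradicting similarity. No recursion and no interleaving of value ranges is needed. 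If you want to salvage your approach, the cleanest fix is to adopt this decoupling of the run length from the twin length; otherwise you would have to actually construct and verify an interleaved even-$k$ seed, which you have not done.
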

\proof  For each $k\ge2$, we construct an infinite sequence of distinct integers which is free of tight 2-twins of length $2k-1$ and $2k$.
Consider a partition of all natural numbers into consecutive blocks of length $2k-1$,
$$\mathds{N}=\bigcup_{m\ge1} A_m,$$
where, for $m\ge1$, $A_m=((m-1)(2k-1)+1,\dots,m(2k-1))$ is viewed as a sequence.
Then we define
$$\pi^{(k)}=(-\overleftarrow{A_1})\overleftarrow{A_2}(-\overleftarrow{A_3})\overleftarrow{A_4}\cdots$$
For example,
$$\pi^{(2)}=(-3,-2,-1,\;6,5,4,\;-9,-8,-7,\;12,11,10,\cdots)$$
(see Figure~\ref{fig:pi2}).
Of course, for any fixed $n$ divisible by $2k-1$, we may extract a permutation of length $n$ as the initial segment and, to get rid of negative integers, rewrite it in the reduced form, that is, as a permutation of $[n]$ similar to it. For instance, for $k=2$ and $n=12$, we then recover the permutation $\pi_{12}$ presented above.

\begin{figure}

\begin{subfigure}[b]{0.4\textwidth}

\scalebox{0.5}
{

\begin{tikzpicture}
[line width = .5pt,
vtx/.style={circle,draw,black,very thick,fill=black, line width = 3pt, inner sep=2pt},
]
    \node[vtx] (v1) at (1,4) {};
    \node[vtx] (v2) at (2,5) {};
    \node[vtx] (v3) at (3,6) {};
    \node[vtx] (v4) at (4,9) {};
    \node[vtx] (v5) at (5,8) {};
    \node[vtx] (v6) at (6,7) {};
    \node[vtx] (v7) at (7,1) {};
    \node[vtx] (v8) at (8,2) {};
    \node[vtx] (v9) at (9,3) {};
    \node[vtx] (v10) at (10,12) {};
    \node[vtx] (v11) at (11,11) {};
    \node[vtx] (v12) at (12,10) {};

    \draw[line width=0.75mm, color=black]  (v1) -- (v2) -- (v3) -- (v4) -- (v5) -- (v6) -- (v7) -- (v8) -- (v9) -- (v10) -- (v11) -- (v12);

    \draw[dashed, line width=0.6mm, color=black] (v12) -- (13,5);

   \draw[color=gray] (0,6.5) -- (13,6.5);
   \draw[color=gray] (0,9.5) -- (13,9.5);
   \draw[color=gray] (0,3.5) -- (13,3.5);

   \fill[fill=black] (v1) circle (0.1) node [right] {\Large{$\; \pi(1)$}};
   \fill[fill=black] (v2) circle (0.1) node [right] {\Large{$\; \pi(2)$}};
   \fill[fill=black] (v3) circle (0.1) node [right] {\Large{$\; \pi(3)$}};
   \fill[fill=black] (v4) circle (0.1) node [right] {\Large{$\; \pi(4)$}};
   \fill[fill=black] (v5) circle (0.1) node [right] {\Large{$\; \pi(5)$}};
   \fill[fill=black] (v6) circle (0.1) node [right] {\Large{$\; \pi(6)$}};
   \fill[fill=black] (v7) circle (0.1) node [right] {\Large{$\; \pi(7)$}};
   \fill[fill=black] (v8) circle (0.1) node [right] {\Large{$\; \pi(8)$}};
   \fill[fill=black] (v9) circle (0.1) node [right] {\Large{$\; \pi(9)$}};
   \fill[fill=black] (v10) circle (0.1) node [right] {\Large{$\; \pi(10)$}};
   \fill[fill=black] (v11) circle (0.1) node [right] {\Large{$\; \pi(11)$}};
   \fill[fill=black] (v12) circle (0.1) node [right] {\Large{$\; \pi(12)$}};

   \draw[line width=1.25mm, color=black]  (v1) -- (v3);
   \draw[line width=1.25mm, color=black]  (v4) -- (v6);
   \draw[line width=1.25mm, color=black]  (v7) -- (v9);
   \draw[line width=1.25mm, color=black]  (v10) -- (v12);

   \fill[fill=black] (2,5.2) circle (0.0) node [left] {\Large{$-\overleftarrow{A_1}\ $}};
   \fill[fill=black] (5.1,7.55) circle (0.0) node [left] {\Large{$\overleftarrow{A_2}\ $}};
   \fill[fill=black] (8.1,2.6) circle (0.0) node [left] {\Large{$-\overleftarrow{A_3}\!$}};
   \fill[fill=black] (11.1,10.55) circle (0.0) node [left] {\Large{$\overleftarrow{A_4}\,$}};

\end{tikzpicture}
}
\caption[b]{}
\label{fig:pi2}
\end{subfigure}
\qquad\qquad
\begin{subfigure}[b]{0.35\textwidth}

\scalebox{0.5}
{

\begin{tikzpicture}
[line width = 0pt,
vtx/.style={},
]
    \node[vtx] (v1) at (1,4) {};
    \node[vtx] (v2) at (2,5) {};
    \node[vtx] (v3) at (3,6) {};
    \node[vtx] (v4) at (4,9) {};
    \node[vtx] (v5) at (5,8) {};
    \node[vtx] (v6) at (6,7) {};
    \node[vtx] (v7) at (7,1) {};
    \node[vtx] (v8) at (8,2) {};
    \node[vtx] (v9) at (9,3) {};




%
   \draw[line width=1.25mm, color=black]  (v1) -- (v3);
   \draw[line width=1.25mm, color=black]  (v4) -- (v6);
   \draw[line width=1.25mm, color=black]  (v7) -- (v9);

   \fill[fill=black] (2,5.2) circle (0.0) node [left] {\Large{$-\overleftarrow{A_1}\ $}};
   \fill[fill=black] (4.9,8.3) circle (0.0) node [right] {\ \Large{$\overleftarrow{A_2}$}};
   \fill[fill=black] (8.1,2.6) circle (0.0) node [left] {\Large{$-\overleftarrow{A_3}\!$}};

   \draw (2.5,0.5) -- (8.5,0.5) -- (8.5, 10.5) -- (2.5, 10.5) -- (2.5, 0.5);
   \node[text width=10cm] at (7.65, 9.8)
    {\Large{\sffamily{Window of width $4k-2$}}};

   \draw[line width=1.5mm, color=blue] (2.5,5.5) -- (3,6);
   \draw[line width=1.5mm, color=blue] (4,9) -- (4.5,8.5);
   \draw[line width=1.5mm, color=red] (4.5,8.5) -- (5,8);
   \draw[line width=1.5mm, color=blue] (5,8) -- (6,7);
   \draw[line width=1.5mm, color=red] (7,1) -- (8,2);
   \draw[line width=1.5mm, color=blue] (8,2) -- (8.5,2.5);

   \node at (2.9,5.4) {\textcolor{blue}{\Large{$D$}}};
   \node at (4,8.4) {\textcolor{blue}{\Large{$D$}}};
   \node at (4.5,8) {\textcolor{red}{\Large{$L$}}};
   \node at (5.2,7.25) {\textcolor{blue}{\Large{$D$}}};
   \node at (7.65,1.2) {\textcolor{red}{\Large{$L$}}};
   \node at (8.25,1.75) {\textcolor{blue}{\Large{$D$}}};

\end{tikzpicture}

}

\caption[b]{}
\label{fig:piDL}
\end{subfigure}

\caption{\subref{fig:pi2} The shape of $\pi=\pi^{(2)}$ from the proof of Proposition~\ref{infty}; \subref{fig:piDL} Defining $D$ and $L$ in the proof of Proposition~\ref{infty}.}
\label{fig:pi}

\end{figure}

We will now show that $\pi:=\pi^{(k)}$ contains no tight 2-twins of length $2k-1$ or  $2k$. By symmetry, it suffices to consider only blocks (`windows') of length $4k-2$ and $4k$, which begin at one of the first $2k-1$ elements of $\pi$. Let such a window begin at the $s$-th element of $(-\overleftarrow{A_1})$, that is at $\pi(s)$, $1\le s\le 2k-1$. It then may stretch over the entire block $\overleftarrow{A_2}$ and, for odd twins, possibly, over some initial segment of $(-\overleftarrow{A_3})$. For even twins, when $s=2k-1$, the window reaches even the first element of $\overleftarrow{A_4}$.

Suppose there are  tight 2-twins of length $2k-1$ in $\pi$ beginning at $\pi(s)$. They yield a partition of the set  $\{\pi(s),\pi(s+1),\dots,\pi(4k-2+s-1)\}=D\cup L$ of length $4k-2$, where $|D|=|L|=2k-1$ (see Figure~\ref{fig:piDL}). Clearly, none of the twins can coincide with $\overleftarrow{A_2}$. For $s$ odd, since then also $2k-1-(s-1)=2k-s$ is odd, w.l.o.g., the twin on $D$ (call it \emph{Daphne}, cf. \cite{Gram}) begins with an increasing segment longer than the initial increasing segment of the $L$-twin (call it \emph{Laurel}), a contradiction with Daphne and Laurel being twins. 
For $s$ even, we look at the other end, that is, at the first $s-1$ elements of $(-\overleftarrow{A_3})$, where again, due to the oddity of $s-1$, Daphne, say, captures more elements than Laurel. This means, however, that Daphne ends with a longer increasing segment than Laurel, a contradiction, again.

It remains to exclude tight 2-twins of length $2k$ in $\pi$. Suppose that there are such twins, $D$ and $L$, or Daphne and Laurel. We are now looking at a window on positions $\{s,s+1,\dots,4k+s-1\}=D\cup L$, $1\le s\le 2k-1$, of length $4k$,  where $|D|=|L|=2k$. The argument is similar to that for twins of length $2k-1$.  Clearly, neither Daphne or Laurel can contain an entire block $\overleftarrow{A_2}$ or $(-\overleftarrow{A_3})$.  As before, for $s$ odd, Daphne begins with an increasing segment longer than the initial increasing segment of Laurel, a contradiction.
For $s$ even, the ending segment consisting of the first $s+1$ elements of $(-\overleftarrow{A_3})$ has an odd length at least 3 and, again,  Daphne, say, ends with a longer increasing segment than Laurel which is a contradiction.
\qed

Now we show how the above construction can be generalized to yield a similar result for any $r\ge 3$.
\begin{prop}\label{rinfty} For all $r,k\ge3$, we have
	$f(r,k)=\infty$.
\end{prop}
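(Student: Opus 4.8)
The plan is to generalize the alternating-run construction of Proposition~\ref{infty}. As there, I would produce an infinite sequence of distinct integers built as a concatenation of reversed blocks: partition $\NN=\bigcup_{m\ge1}A_m$ into consecutive blocks of a length $\ell=\ell(r,k)$ still to be chosen, and define $\pi=\pi^{(r,k)}$ by placing the reversed blocks $\overleftarrow{A_m}$ on an interleaved system of pairwise separated value-ranges with alternating up/down orientation (the $r=2$ case used the two ``sides'' recorded by the sign changes in $(-\overleftarrow{A_1})\overleftarrow{A_2}(-\overleftarrow{A_3})\cdots$). The design goal is that every maximal monotone run of $\pi$ has a controlled length and that consecutive runs are oppositely oriented, so that the global shape is as rigid as in Figure~\ref{fig:pi2}.

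As in Proposition~\ref{infty}, I would use the translation-invariance of the construction to reduce to a bounded family of cases: it suffices to exclude tight $r$-twins of length $k$ inside windows of length $rk$ that begin at a position $s$ within the first block, $1\le s\le\ell$. For each such window I would record its decomposition into maximal monotone segments; the first and last of these are partial runs whose lengths are determined by $s$, and hence fall into prescribed residue classes modulo $r$ once $\ell$ is fixed in a suitable residue class.

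The core of the proof is a rigidity lemma extending the Daphne--Laurel dichotomy. If the window contained tight $r$-twins $T_1,\dots,T_r$, then all of them would be similar and would therefore share one common initial monotone-segment length, say $b$, and one common final monotone-segment length. I would bound the ``pattern budget'' of the window: using the value geometry of the construction (the decreasing run plunges below the values where the window started, so very few early positions can begin an ascending pair), I would show that the leading partial run can supply a matching leading pattern to at most $r-1$ of the twins, so the twins cannot all realize the same value of $b$. Symmetrically, the trailing partial run controls the final pattern, and this is the analogue of the $s$ even / $s$ odd split. The parity argument of the $r=2$ proof becomes a divisibility argument: choosing $\ell$ in the right residue class modulo $r$ forces at least one of the two extreme partial runs to have length not divisible by $r$, so that its elements cannot be shared equally among $r$ similar twins, producing the required mismatch.

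The step I expect to be hardest is precisely this rigidity lemma for $r\ge3$. For $r=2$ there are essentially only two ways to assign the leading run to the two twins, whereas for general $r$ one must exclude \emph{every} distribution of the $rk$ window positions into $r$ length-$k$ subsequences. Keeping the budget argument airtight will require tracking, monotone segment by monotone segment, how many elements each prospective twin may draw, and it is this bookkeeping --- together with the exact choice of $\ell$ as a function of $r$ and $k$ that makes the residues work out at \emph{both} ends of the window --- that constitutes the real content beyond the $r=2$ case.
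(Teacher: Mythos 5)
Your plan reproduces the right construction (reuse the alternating reversed blocks of Proposition~\ref{infty} with block length tuned to $r$ and $k$; the paper takes blocks of length $rk-1$ and rules out tight $r$-twins of length $2k-1$ and $2k$), and it correctly senses that the obstruction should be an indivisibility-by-$r$ pigeonhole. But the proof has a genuine gap: the ``rigidity lemma'' that you yourself flag as the hardest step is never supplied, and it is precisely the content of the argument. Saying that a monotone run of length not divisible by $r$ ``cannot be shared equally among $r$ similar twins'' does not by itself produce a contradiction: nothing forces the twins to draw equally many elements from any given run (a twin may take none at all), and the length of a twin's initial monotone segment is not determined by how many elements it draws from the window's first run --- it also depends on whether the twin continues into the next block, which lies entirely above (or below) the first. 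Moreover, you place the indivisibility in the two extreme \emph{partial} runs and propose to control their residues by choosing the block length $\ell$; this cannot work uniformly, because the first partial run has length $\ell-s+1$, which sweeps through all residues as the window offset $s$ varies.

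The paper's proof puts the pigeonhole elsewhere and then does the structural work you deferred. It shows the window $W=T_1\cup\cdots\cup T_r$ must contain all but at most $r-1$ elements of the \emph{full} second block $B_2$, whose length $rk-1$ is not divisible by $r$; averaging then yields twins $T_{i_0}$ and $T_{i_1}$ with $|T_{i_0}\cap B_2|\le k-1$ and $|T_{i_1}\cap B_2|\ge k$. Two preliminary claims (every twin meets $B_1$, else initial monotone directions disagree; every twin meets $B_2$, else one twin has a length-$3$ decreasing segment that another cannot have) pin all the twins' first elements into $B_1$. The contradiction then comes from a similarity invariant you would still need to find: $T_{i_1}$ contains a decreasing segment of length $k$ all of whose entries exceed its first element, while $T_{i_0}$ cannot, since any length-$k$ decreasing segment of $T_{i_0}$ must dip into $B_3$, which lies entirely below $B_1$. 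Without an argument of this kind --- one that converts the counting mismatch into a violation of similarity --- your outline does not yet constitute a proof.
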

\proof We use previous constructions with blocks of length $rk-1$, $k\ge2$, obtaining a permutation $\pi_r^{(k)}$. Let us denote its consecutive blocks of length $rk-1$ by $B_1,B_2,\dots$, that is, $B_1=-\overleftarrow{A_1}$, $B_2=\overleftarrow{A_2}$, etc. We will show that  $\pi_r^{(k)}$ does not contain tight $r$-twins of length $\ell\in\{2k-1,2k\}$. Suppose the opposite and let $T_1,\dots,T_r$ be tight $r$-twins of length $\ell$ in $\pi_r^{(k)}$.

W.l.o.g., consider a window $W=T_1\cup\cdots\cup T_r$ of length $r\ell$ beginning at the $s$-th element of $B_1$. Then, since $\ell\ge 2k-1$, $s\ge1$ and $r\ge2$,
$$|W\cap B_2|\ge\min\{2\ell-(rk-s),rk-1\}\ge\min\{2rk-r-(rk-s),rk-1\}\ge rk-r+1.$$
 Also, $W\cap B_4=\emptyset$, except for $\ell=2k$ and $s=rk-1$, when $|W\cap B_4|=1$. By taking the average, there is $T_{i_0}$ with $|T_{i_0}\cap B_2|=\min_{i}|T_{i}\cap B_2|\le k-1$ and $T_{i_1}$ with $|T_{i_1}\cap B_2|=\max_{i}|T_{i}\cap B_2|\ge k$. Moreover, if  for no $i$, $|T_{i}\cap B_2|\ge k+1$, then for all $i$, $|T_{i}\cap B_2|\ge k-1\ge 1$.

We claim that for all $i$, $|T_{i}\cap B_2|\ge 1$. Suppose  that $T_{i_0}\cap B_2=\emptyset$. Then, in view of the above, $|T_{i_1}\cap B_2|\ge k+1\ge3$. This means that $T_{i_1}$ has a decreasing segment of length at least 3, while $T_{i_0}$ does not, as $T_{i_0}\subset B_1\cup B_3\cup\{f\}$, where $f$ is the first element of $B_4$.

 Also, for all $i$, $|T_{i}\cap B_1|\ge 1$. Indeed, otherwise there would be a twin which begins with a decreasing segment and a twin which begins with an increasing segment, a contradiction. Finally, compare $T_{i_0}$ with $T_{i_1}$. There is in $T_{i_1}$ a decreasing segment of length $k$ with all elements larger than the first element of $T_{i_1}$. On the other hand, there is not such a segment in $T_{i_0}$ (there might be a decreasing segment of length $k$ which, however, ends in $B_3$, thus, below the first element of $T_{i_0}$ which belongs to $B_1$). This is a contradiction, again, and the proof is completed. \qed

\bigskip

Proposition 3.7 in \cite{DGR} together with permutation $\pi_2=(1,4,3,2,5)$ show that $f(2,2)=6$.
Thus, in view of Proposition \ref{infty}, $f(2,k)$ is determined for all $k\ge2$.
Let us now focus on $f(r,2)$, $r\ge3$. For simplicity, we will call tight $r$-twins of length 2 just \emph{$r$-twins}.
Besides $f(2,2)=6$, we also know that $f(3,2)=12$. Indeed, permutation
$$\pi_3=(11,2,3,8,7,6,5,4,9,10,1)$$
of length 11 is free of  3-twins, while  a computer verification of all permutations of length 12 reveals that each one of them contains  3-twins. For  $r\ge4$, however, we only have a lower bound on $f(r,2)$, which is quadratic in $r$.

\begin{prop}\label{ttr2}
	For every $r\ge3$ we have $f(r,2)\ge r(r+5)-12$.
\end{prop}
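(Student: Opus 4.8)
The plan is to prove the bound by exhibiting a single permutation $\sigma$ of length $N:=r(r+5)-13$ that contains \emph{no} tight $r$-twins of length $2$; since $f(r,2)$ is the least $n$ that forces such twins, this gives $f(r,2)\ge N+1=r(r+5)-12$. First I would record the relevant reformulation. A block $K$ of $2r$ consecutive positions carries tight $r$-twins of length $2$ exactly when the subsequence of $\sigma$ on $K$ splits into $r$ pairwise disjoint \emph{similar} pairs; as every length-$2$ pattern is either ascending or descending, this means that $K$ decomposes into $r$ disjoint ascending pairs, or into $r$ disjoint descending pairs. Hence it suffices to build $\sigma$ so that for every window $W$ of $2r$ consecutive entries, neither an ascending nor a descending perfect pairing of $W$ exists.

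To control this, I would view each requirement as a matching problem in the dominance graph $G^+(W)$ whose edges are the ascending pairs (and $G^-(W)$ for descending pairs), and isolate two clean obstructions. First, if $W$ contains a descending subsequence of length $\ge r+1$, then this is an independent set of more than $r$ vertices in $G^+(W)$, which forces any ascending matching to expose a vertex and thus to have size $\le r-1$; symmetrically, an ascending subsequence of length $\ge r+1$ rules out $r$ disjoint descending pairs. Second, a Hall-type \emph{bottleneck}: if two entries of $W$ can each be completed to a descending pair only by using one and the same third entry, then they cannot both be matched, again dropping the descending matching below $r$ (and symmetrically for ascending). These are precisely the two phenomena that make the $r=3$ witness $\pi_3=(11,2,3,8,7,6,5,4,9,10,1)$ work: the decreasing block $8,7,6,5,4$ provides a long descending subsequence that kills ascending pairs, while the two small entries $2,3$ compete for the single earlier larger entry $11$ (and, at the other end, $9,10$ compete for the final $1$), killing descending pairs.

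The construction generalizes this template. I would assemble $\sigma$ as a concatenation of monotone runs with prescribed value-intervals, using long decreasing runs to guarantee in every window a descending subsequence of length $\ge r+1$ blocking ascending pairings, and planting extreme values (large entries followed by clusters of small ones, together with their mirror image) to manufacture in every window a descending bottleneck blocking descending pairings. To reach length $\Theta(r^2)$ while keeping such an obstruction present in all $\sim r^2$ windows, the gadgets must be repeated and scaled with $r$: the bottleneck that for $r=3$ needed only two competing entries must be enlarged so that too many entries compete for too few partners once $W$ is permitted $r$ pairs. I would then pick the run lengths so that the total is exactly $r(r+5)-13$; the shape $N=r\cdot(r+5)-13$ suggests $\Theta(r)$ runs of size $\Theta(r)$ together with a small constant correction.

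The main obstacle is the uniform window verification: one must show that \emph{every} window $W$ of length $2r$—in particular those straddling the boundary between an increasing gadget and a decreasing run, where both ascending and descending pairs are abundant—still fails both pairings. This is delicate because a single $2r$-window cannot contain both an ascending and a descending subsequence of length $\ge r+1$ (their union would require at least $2r+1$ entries), so in each window at least one of the two directions must be blocked by the subtler bottleneck obstruction rather than by a long monotone subsequence. Arranging the value-intervals so that the appropriate bottleneck is always available in the relevant window, while still making the run lengths sum to exactly $r(r+5)-13$, is where the bulk of the casework lies.
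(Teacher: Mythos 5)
Your overall strategy is the same as the paper's: exhibit a single permutation of length $r(r+5)-13$ with no tight $r$-twins of length $2$, reformulate this window-by-window as the non-existence of both an ascending and a descending perfect pairing of each block of $2r$ consecutive entries, and kill each direction either by a monotone subsequence of length $\ge r+1$ of the opposite type (your independent-set/pigeonhole obstruction) or by a Hall-type bottleneck. Your observation that the two long-monotone obstructions cannot coexist in a single $2r$-window, so that at least one direction must always be handled by a bottleneck, is correct and accurately predicts the shape of the real argument. However, the proposal stops exactly where the proof begins: no permutation is actually written down, the run lengths and value-intervals are left unspecified ("I would then pick the run lengths so that the total is exactly $r(r+5)-13$"), and you explicitly defer the uniform window verification as "where the bulk of the casework lies." Since the entire content of the proposition is that explicit construction together with that verification, this is a proof plan rather than a proof, and the bound $r(r+5)-12$ is not established.

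For comparison, here is what the paper actually does. It takes monotone runs $A_0,A_1$ of length $r-1$ and $A_2,\dots,A_{r-1}$ of lengths $r-2,r-3,\dots,1$, pads with further singletons up to $A_{3r-7}$ (so $2r-5$ singletons in all on each side), and forms the symmetric arrangement $\cdots\overleftarrow{A_2}\,(-\overleftarrow{A_1})\,\overleftarrow{A_0}\,0\,(-A_0)\,A_1\,(-A_2)\cdots$, shifted afterwards to a permutation of $[r(r+5)-13]$. The two crucial structural features are: a central decreasing run $\overleftarrow{A_0}\,0\,(-A_0)$ of length $2r-1$, and the fact that each successive run lies entirely above or entirely below \emph{all} earlier elements on its side. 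The verification locates the rightmost entry $w$ of the window: if $w$ sits in a run lying on top, it is the window maximum, so all $r$ pairs must be ascending, and then the immediately preceding run must be matched entirely into the current one, which fails by counting (the preceding run is longer; in the singleton zigzag one compares $|A_{r-3}|=3$ against $|A_{r-2}|=2$); the case of $w$ in a bottom run is symmetric with descending pairs; and if $w$ lies in the central run, the window contains a decreasing subsequence of length $\ge r+1$. So the bottleneck you postulate is realized concretely as "a longer run forced to inject into a shorter adjacent run," and the forcing of the pairing's direction comes from $w$ being extremal in its window. Without supplying these choices and checking each window position, your argument does not yet yield the stated quadratic bound.
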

\proof We begin by constructing a suitable permutation of length $r(r+5)-13$.

For $r=3$, we have already presented above  permutation $\pi_3$  of the required length $11=3(3+5)-13$.
Fix $r\ge4$ and consider the following sequence of sequences of consecutive  integers of (mostly) diminishing length: $A_0=(1,\dots,r-1)$, $A_1=(r,\dots,2r-2)$, $A_2=(2r-1,\dots,3r-4)$, $A_3=(3r-3,\dots,4r-7)$, $\dots$, $A_{r-1}=(\binom r2+r-1)$, $A_{r}=(\binom r2+r)$, $\dots$, $A_{3r-7}=(\binom r2+3r-7)$. Note that the first two sequences have the same length $r-1$, then each next one is shorter by one, and, finally $|A_{r-1}|=|A_r|=\cdots=|A_{3r-7}|=1$. In total, their concatenation makes up the sequence $(1,\dots, \binom r2+3r-7)$. Define permutation
$$\pi'_r=((-1)^{r-1}\overleftarrow{A_{3r-7}})\cdots \overleftarrow{A_2}\;(-\overleftarrow{A_1})\;\overleftarrow{A_0}\;0\;(-A_0)\;A_1\;(-A_2)\cdots (-1)^{r}A_{3r-7}$$
of length
$$2\times\left(\binom r2+3r-7\right)+1=r(r+5)-13.$$
(Of course, for singleton classes the overhead arrow is redundant.)
Further, let $\pi_r$ be the reduced form of $\pi'_r$, obtained, simply, by adding $\binom r2+3r-6$ to all elements of $\pi'_r$.
For instance,
$$\pi'_4=(-11,10,-9,8,7,-6,-5,-4,3,2,1,0,-1,-2,-3,4,5,6,-7,-8,9,-10,11)$$
becomes
$$\pi_4=(1,22,3,20,19,6,7,8,15,14,13,12,11,10,9,16,17,18,5,4,21,2,23)$$
(see Figure~\ref{fig:pi4}).
One more example:
\begin{align*}
\pi_5=(37,2,35,4,&33,6,7,30,29,28,11,12,13,14,23,22,21,20,
\\&19,18,17,16,15,24,25,26,27,10,9,8,31,32,5,34,3,36,1).
\end{align*}

\begin{figure}

\scalebox{0.45}
{

\begin{tikzpicture}
[line width = .5pt,
vtx/.style={circle,draw,black,very thick,fill=black, line width = 3pt, inner sep=2pt},
]

    \node[vtx] (v1) at (1,1) {};
    \node[vtx] (v2) at (2,22) {};
    \node[vtx] (v3) at (3.5,3) {};
    \node[vtx] (v4) at (4,20) {};
    \node[vtx] (v5) at (5,19) {};
    \node[vtx] (v6) at (6,6) {};
    \node[vtx] (v7) at (7,7) {};
    \node[vtx] (v8) at (8,8) {};
    \node[vtx] (v9) at (9,15) {};
    \node[vtx] (v10) at (10,14) {};
    \node[vtx] (v11) at (11,13) {};
    \node[vtx] (v12) at (12,12) {};
    \node[vtx] (v13) at (13,11) {};
    \node[vtx] (v14) at (14,10) {};
    \node[vtx] (v15) at (15,9) {};
    \node[vtx] (v16) at (16,16) {};
    \node[vtx] (v17) at (17,17) {};
    \node[vtx] (v18) at (18,18) {};
    \node[vtx] (v19) at (19,5) {};
    \node[vtx] (v20) at (20,4) {};
    \node[vtx] (v21) at (21,21) {};
    \node[vtx] (v22) at (22,2) {};
    \node[vtx] (v23) at (23,23) {};

    \draw[line width=0.75mm, color=black]  (v1) -- (v2) -- (v3) -- (v4) -- (v5) -- (v6) -- (v7) -- (v8) -- (v9) -- (v10) -- (v11) -- (v12) -- (v13) -- (v14) -- (v15) -- (v16) -- (v17) -- (v18) -- (v19) -- (v20) -- (v21) -- (v22) -- (v23);

   \draw[color=gray] (0,1.5) -- (24,1.5);
 \draw[color=gray] (0,2.5) -- (24,2.5);
   \draw[color=gray] (0,3.5) -- (24,3.5);
   \draw[color=gray] (0,5.5) -- (24,5.5);
   \draw[color=gray] (0,8.5) -- (24,8.5);
   \draw[color=gray] (0,12) -- (24,12);
   \draw[color=gray] (0,15.5) -- (24,15.5);
   \draw[color=gray] (0,18.5) -- (24,18.5);
   \draw[color=gray] (0,20.5) -- (24,20.5);
   \draw[color=gray] (0,21.5) -- (24,21.5);
   \draw[color=gray] (0,22.5) -- (24,22.5);

   \fill[fill=black] (v12) circle (0.1) node [right] {\Large{$\ 0$}};
   \fill[fill=black] (v10) circle (0.1) node [right] {\Large{$\ \overleftarrow{A_0}$}};
   \fill[fill=black] (v14) circle (0.1) node [left] {\Large{$-A_0\ $}};
   \draw[line width=1.25mm, color=black]  (v9) -- (v11);
   \draw[line width=1.25mm, color=black]  (v13) -- (v15);

   \fill[fill=black] (v7) circle (0.1) node [right] {\Large{$\ -\overleftarrow{A_1}$}};
   \draw[line width=1.25mm, color=black]  (v6) -- (v8);

  \fill[fill=black] (v17) circle (0.1) node [left] {\Large{$A_1\ $}};
  \draw[line width=1.25mm, color=black]  (v16) -- (v18);

  \fill[fill=black] (4.3,19.9) circle (0.0) node [right] {\Large{$\; \overleftarrow{A_2}$}};
  \draw[line width=1.25mm, color=black]  (v4) -- (v5);

  \fill[fill=black] (19.6,4.3) circle (0.0) node [left] {\Large{$\; -A_2$}};
  \draw[line width=1.25mm, color=black]  (v19) -- (v20);

  \fill[fill=black] (v3) circle (0.1) node [right] {\Large{$\; -\overleftarrow{A_3}$}};
  \fill[fill=black] (v2) circle (0.1) node [right] {\Large{$\; \overleftarrow{A_4}$}};
  \fill[fill=black] (v1) circle (0.1) node [right] {\Large{$\; -\overleftarrow{A_5} = (\pi(1))$}};

  \fill[fill=black] (v21) circle (0.1) node [left] {\Large{$A_3\ $}};
  \fill[fill=black] (v22) circle (0.1) node [left] {\Large{$-A_4\ $}};
  \fill[fill=black] (v23) circle (0.1) node [left] {\Large{$A_5 = (\pi(23))\ $}};

\end{tikzpicture}

}

\caption{The shape of $\pi=\pi_4$ (and as well as $\pi'_4$) from the proof of Proposition~\ref{ttr2}.}
\label{fig:pi4}

\end{figure}
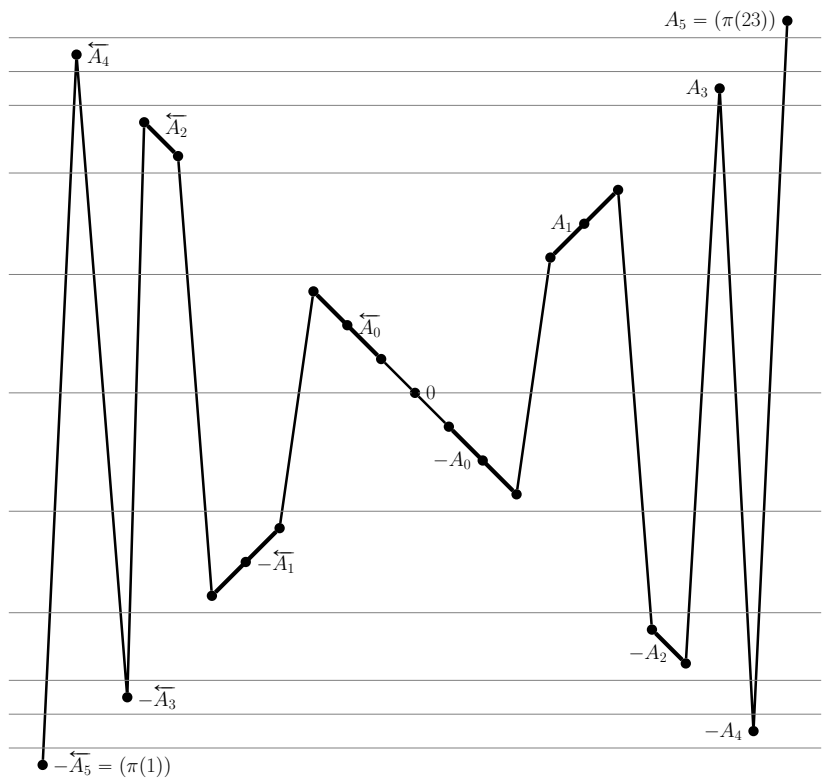

In general, $\pi_r$ consists of a decreasing sequence of length $2r-1$ located in the middle which is extended in both directions by alternatingly increasing and decreasing sequences of  lengths getting shorter by one in each step, except for the very end where, on each side, we have a zigzag pattern of singletons of length $2r-5$. Moreover, what is crucial here, each next monotone segment (to the left or to the right) is entirely below or entirely above all previous elements appearing on the same side.

We are going to prove that there are no tight $r$-twins of length 2 (shortly, $r$-twins) in~$\pi_r$.   For the proof we need to distinguish two types of $r$-twins. We call $r$-twins  \emph{increasing} if they are similar to $(1,2)$ and \emph{decreasing} if they are similar to $(2,1)$.
Suppose that there are  $r$-twins in $\pi_r$.
As they occupy a block (window) of length $2r$, let us consider all possible locations of it. Due to symmetry, it suffices to consider windows with the right end belonging to the right half of $\pi_r$, that is, to be to the right of 0 in $\pi'_r$ (for the ease of description we will look at $\pi'_r$, not $\pi_r$, which is, of course, equivalent).

Let $w$ be the rightmost element of the window and assume first that it belongs to $A_m$, $m$ odd. Then, since the last element of the window is the largest one, the $r$-twins have to be increasing. Thus, the elements of the previous block, $(-A_{m-1})$ have to be all paired with the elements of $A_m$. If $m\le r-1$, then, $|A_{m-1}|<|A_m|$, and we get a contradiction. If $w\in A_r\cup\cdots\cup A_{3r-7}$, i.e., it belongs to the zigzag of singletons at the end,  the window still contains $A_{r-3}\cup A_{r-2}$. Then each singleton has to be paired with the next one, and the three elements of $A_{r-3}$ have to be paired with the two elements of $A_{r-2}$, a contradiction again. For $m$ even, the situation is symmetrical: the last element is the smallest, so the $r$-twins have to be decreasing, which again leads to a contradiction.

It remains to consider the case when $w$ belongs to $(-A_0)$. Then the window contains a decreasing sub-sequence of length at least $r+1$ (all its elements but those sitting in $(-A_1)$). But, clearly, no two elements of a decreasing sub-sequence can be paired with each other in increasing twins.
We, again, arrive at a contradiction which completes the proof. \qed

\section{Block, tight, and block-tight twins in random permutations}\label{Sr} In the previous sections we used a random permutation $\Pi_n$ as a tool of the probabilistic method to estimate $bt^{(r)}(n)$ and $tt^{(r)}(n)$.
Now we are interested in the longest length of block, tight, and block-tight $r$-twins \emph{in} a random permutation. We say that an event $\cE_n$ in the uniform probability space of all $n!$ permutations of $[n]$ holds \emph{asymptotically almost surely}, or \emph{a.a.s.}, for short, if $\PP(\cE_n)\to1$, as $n\to\infty$.

\subsection{Block twins} The next result  shows that  the maximum length of block $r$-twins in $\Pi_n$ is a.a.s.~just a little bit greater than in the worst case and the difference diminishes with $r$ increasing (cf. Theorem \ref{Theorem Block Twins}).

\begin{thm}\label{Theorem Block Twins Random}
	For a random $n$-permutation $\Pi_n$, a.a.s.~we have
	\[
	bt^{(r)}(\Pi_n) = \left(\frac r{r-1}+o(1)\right) \frac{\log n}{\log \log n}.
	\]
\end{thm}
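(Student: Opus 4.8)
\emph{Approach.} The plan is to apply the first and second moment methods to the random variable $X_k$ counting the block $r$-twins of length $k$ in $\Pi_n$. Writing $X_k=\sum_\alpha\mathds 1[\cE_\alpha]$, where $\alpha$ ranges over the $k$-spread $r$-tuples $(i_1,\dots,i_r)$ from the proof of Theorem~\ref{Theorem Block Twins} and $\cE_\alpha=\cE(A_1^\alpha,\dots,A_r^\alpha)$ is the event that the $r$ length-$k$ blocks starting at $i_1,\dots,i_r$ form block $r$-twins (precisely the events of Section~\ref{Prelim}), identity \eqref{1|k!} gives $\PP(\cE_\alpha)=k!^{-(r-1)}$. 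Since the number of $k$-spread $r$-tuples equals $\binom{n-rk+r-1}{r}=(1+o(1))n^r/r!$ for $k=o(n)$, we obtain $\EE X_k=(1+o(1))\,n^r/(r!\,k!^{r-1})$. By the Stirling formula this is a sharp threshold: $\EE X_k\to\infty$ below, and $\EE X_k\to0$ above, the value $k^\ast\sim\frac r{r-1}\cdot\frac{\log n}{\log\log n}$ solving $(r-1)k\log k\sim r\log n$. This already identifies the claimed constant $r/(r-1)$; the task is to upgrade the two moment heuristics to a.a.s.\ statements.

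\emph{Upper bound.} Fix $\varepsilon>0$ and set $k=\lceil(1+\varepsilon)k^\ast\rceil$. Then $\EE X_k=o(1)$, so Markov's inequality gives $X_k=0$ a.a.s.; since block $r$-twins are monotone (absence of length $k$ rules out all greater lengths), a.a.s.\ $bt^{(r)}(\Pi_n)<(1+\varepsilon)k^\ast$.

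\emph{Lower bound.} Here I would take $k$ to be the largest integer with $\EE X_k\ge(\log n)^{r+1}$; since each unit decrease of $k$ multiplies $\EE X_k$ by $k^{r-1}$, only $O(1)$ steps below $k^\ast$ are needed, so still $k=(1-o(1))k^\ast$, while now $\EE X_k\to\infty$ and, crucially, $\EE X_k\gg k^r$. The heart of the argument is $\EE X_k^2=\sum_{\alpha,\beta}\PP(\cE_\alpha\cap\cE_\beta)$, organised by $s=s(\alpha,\beta)$, the number of blocks of $\alpha$ disjoint from every block of $\beta$. For $s\ge r-1$, Lemma~\ref{ABCD} (second statement) gives independence, $\PP(\cE_\alpha\cap\cE_\beta)=\PP(\cE_\alpha)\PP(\cE_\beta)$, and these pairs sum to $(1+o(1))(\EE X_k)^2$. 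For $s\le r-2$ the first statement of Lemma~\ref{ABCD} yields $\PP(\cE_\alpha\cap\cE_\beta)\le k!^{-s}\cdot k!^{-(r-1)}$; fixing $\beta$ in $\le n^r$ ways, placing each of the $d:=r-s$ non-free blocks of $\alpha$ within $O(k)$ positions of a block of $\beta$, and the $s$ free blocks anywhere, bounds the number of such pairs by $O(n^{r+s}k^{\,d})$, so their total contribution is $O\!\big(n^{r+s}k^{\,d}/k!^{\,r-1+s}\big)$. Dividing by $(\EE X_k)^2\asymp n^{2r}/k!^{\,2r-2}$ and substituting $k!^{\,r-1}=(1+o(1))n^r/(r!\,\EE X_k)$ reduces the ratio to
$$O\!\Big(k^{\,d}\,(\EE X_k)^{-(d-1)/(r-1)}\,n^{(d-r)/(r-1)}\Big),$$
whose exponent $(d-r)/(r-1)$ of $n$ is negative for every $2\le d\le r-1$ and zero only for $d=r$, the latter term being $O(k^r/\EE X_k)=o(1)$ by the choice $\EE X_k\gg k^r$. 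Hence $\mathrm{Var}\,X_k=o((\EE X_k)^2)$, and Chebyshev's inequality gives $X_k>0$ a.a.s., i.e.\ $bt^{(r)}(\Pi_n)\ge(1-o(1))k^\ast$. Letting $\varepsilon\to0$ combines the two bounds into the asserted asymptotics.

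\emph{Main obstacle.} The delicate point is the second-moment bookkeeping: one must show that the correlated contribution is negligible at \emph{every} overlap level $2\le d\le r$ at once, which is exactly where the inequality part of Lemma~\ref{ABCD} enters and why the factor-$k!^{-s}$ gain must be weighed against the combinatorial loss $k^{\,d}$ from placing the overlapping blocks. The extremal case is the full overlap $d=r$ (all blocks of $\alpha$ crowding $\beta$): it is the only level not killed by a power of $n$, and it forces the side condition $\EE X_k\gg k^r$, which is precisely why $k$ should be chosen a little below the naive threshold $k^\ast$ rather than exactly at it. Everything else is the routine Stirling estimation required to extract the sharp constant $r/(r-1)$.
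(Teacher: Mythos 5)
Your proposal is correct and follows essentially the same route as the paper: first moment plus Markov for the upper bound, and a second-moment computation for the lower bound in which the covariances are organized by the overlap parameter $s$ and controlled via Lemma~\ref{ABCD}. The only cosmetic differences are that you fix $k$ implicitly through the condition $\EE X_k\ge(\log n)^{r+1}$, whereas the paper takes $k^-=\left\lfloor \frac{r\log n}{(r-1)\log\log n}\right\rfloor$ outright and verifies the equivalent side condition $n^r/((\log n)^r(k^-)!^{r-1})\to\infty$, and that you index the overlap by $d=r-s$.
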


\begin{proof} For an integer $k\ge2$,
	recall from the proof of Theorem \ref{Theorem Block Twins} that for a given $k$-spread $r$-tuple of indices $i_1,\dots,i_r$, $\cE_{i_1,\dots,i_r}$ denotes the event that segments $(\Pi(i_j),\Pi(i_j+1)),\dots,\Pi(i_j+k-1))$, $j=1,\dots,r$,  form block $r$-twins.
	Let $X_{i_1,\dots,i_r}$ be the indicator random variable of the event $\cE_{i_1,\dots,i_r}$, that is, $X_{i_1,\dots,i_r}=1$ if $\cE_{i_1,\dots,i_r}$ holds; otherwise $X_{i_1,\dots,i_r}=0$. Set $X(k)=\sum X_{i_1,\dots,i_r}$, where the sum extends over all $k$-spread $r$-tuples $i_1,\dots,i_r$.
	
	By \eqref{1|k!}, we have $\Prob(X_{i_1,\dots,i_r}=1)=\PP(\cE_{i_1,\dots,i_r})=k!^{-(r-1)}$ and thus $\E(X(k)) = \Theta(n^rk!^{-(r-1)})$, where the hidden constant is less than one.
	Let
	$$k^+ = \left\lceil  \frac{(1+\epsilon_n)r\log n}{(r-1)\log\log n}\right\rceil,\quad\mbox{where}\quad\frac{\log\log n}{\log\log\log n}\ll\epsilon_n=o(1).$$ Then, with $c_r=\tfrac{e(r-1)}r$,
	$$\E X(k^+)\le n^r(k^+)!^{-(r-1)}\le\left(\frac e{k^+}\right)^{k^+(r-1)}n^r\le\left(\frac{c_r\log\log n}{\log n}\right)^{\frac{(1+\epsilon_n)r\log n}{\log\log n}}n^r\to0,$$
	because, after taking the logarithm,
	$$\frac{(1+\epsilon_n)r\log n}{\log\log n}(\log c_r+\log\log\log n-\log\log n)+r\log n\to-\infty.$$
	Hence, by Markov's inequality, a.a.s., $X(k^+)=0$, that is, $bt^{(r)}(\Pi_n)<k^+$.

	We will establish a matching lower bound on $bt^{(r)}(\Pi_n)$ by the second moment method. Set $k^- = \left\lfloor  \frac{r\log n}{(r-1)\log\log n}\right\rfloor$. Then, $\E X(k^-)\to\infty$, since
	$$\frac{n^r}{(k^-)!^{r-1}}\ge\left(\frac1{k^-}\right)^{(r-1)k^-}n^r\ge\left(\frac{(r-1)\log\log n}{r\log n}\right)^{\frac{r\log n}{\log\log n}}n^r\to\infty,$$
	because $$\frac{r\log n}{\log\log n}\big{(}\log((r-1)/r)+\log\log\log n-\log\log n\big{)}+r\log n\to\infty.$$
	By the same kind of calculations, we also have,
	\begin{equation}\label{samekind}
	\frac{n^r}{(\log n)^r(k^-)!^{r-1}}\to\infty\quad\mbox{and}\quad n/(k^-)!=o(1).
	\end{equation}
	
	Now, we turn to estimating $Var(X(k^-))$. For two $k^-$-spread $r$-tuples of indices, $i_1,\dots,i_r$ and $i'_1,\dots,i'_r$, let us analyze the covariance $Cov(X_{i_1,\dots,i_r},X_{i'_1,\dots,i'_r})$. Set $A_j=\{i_j,\dots,i_j+k-1\}$ and $A'_j=\{i'_j,\dots,i'_j+k-1\}$, $j=1,\dots,r$. Let $0\le s \le r$ be the largest integer such that there are indices $1\le j_1<\cdots<j_s\le r$ with
	\begin{equation}\label{s}
	(A_{j_1}'\cup\cdots\cup A_{j_s}')\cap(A_1\cup\dots\cup A_r)=\emptyset.
	\end{equation}
	Then, by Lemma \ref{ABCD} with $t=2$, for $s\in\{r-1,r\}$, due to independence, $Cov(X_{i_1,\dots,i_r},X_{i'_1,\dots,i'_r})=0$, while for $s\le r-2$, using also \eqref{1|k!},
	$$Cov(X_{i_1,\dots,i_r},X_{i'_1,\dots,i'_r})\le\PP(X_{i_1,\dots,i_r}=X_{i'_1,\dots,i'_r}=1)\le\frac1{(k^-)!^{s+r-1}}.$$
	Moreover, for $s\le r-2$ and a given $k^-$-spread $r$-tuple $i_1,\dots,i_r$, the number of  $k^-$-spread $r$-tuples $i'_1,\dots,i'_r$ satisfying \eqref{s} is $o\left(n^s(\log n)^{r-s}\right)$. Indeed, for each $j\not\in \{j_1,\dots,j_s\}$ there are no more than $2rk^-=o(\log n)$ choices for placing $i_j$, while for $j\in \{j_1,\dots,j_s\}$ ``the sky's the limit''.

	Hence,
	$$Var(X(k^-))=\sum_{i_1,\dots,i_r}\sum_{i'_1,\dots,i'_r}Cov(X_{i_1,\dots,i_r},X_{i'_1,\dots,i'_r})
	=o\left(n^r\sum_{s=0}^{r-2}\frac{n^s(\log n)^{r-s}}{(k^-)!^{s+r-1}}\right)$$ and, by Chebyshev's inequality and \eqref{samekind}
	$$\PP(X(k^-)=0)\le\frac{Var(X(k^-))}{(\E X(k^-))^2}=o\left(\frac{(\log n)^r(k^-)!^{r-1}}{n^r}\sum_{s=0}^{r-2}\left(\frac n{(k^-)!}\right)^s\right)=o(1).$$
	
\end{proof}

\subsection{Tight and block-tight twins} It turns out that the longest tight and block-tight $r$-twins in a random permutation have asymptotically the same length. To see the reason, let $Y(k)$ and $Z(k)$ denote, resp., the number of tight and block-tight $r$-twins of length $k$ in $\Pi_n$. Then
$$\E Y(k)=(n-rk+1)\times\frac1{r!}\binom{rk}{k,\dots,k}\times\frac1{k!^{r-1}}\quad\mbox{and}\quad\E Z(k)=(n-rk+1)\times\frac1{k!^{r-1}},$$ and the extra factor in $\E Y(k)$, counting the partitions of a block of length $rk$ into $r$ blocks of length $k$, turns out to be of an negligible order of magnitude.

We put these two results under one theorem, because they have a common proof. Indeed,  as every block-tight $r$-twins are also tight, $Z(k)\le Y(k)$, so it will be sufficient to bound $\PP( Y(k)>0)$ and  $\PP( Z(k)=0)$ only. This is quite fortunate, as estimating $\PP( Y(k)=0)$ seems to be much harder.
In fact, the estimates needed in the proof of Theorem \ref{bttRandom} below become very similar to, and even easier than, those in the proof of Theorem \ref{Theorem Block Twins Random}. There is one twist, however.
Since the property of possessing tight, as well as block-tight, $r$-twins of length $k$ is not monotone in $k$, to prove the upper bound we need to estimate not just $\E X(k^+)$, but $\sum_{k\ge k^+}\E X(k^+)$.

Note that, roughly, the longest tight and block-tight $r$-twins in a random permutation are $r$ times shorter than largest block $r$-twins.

\begin{thm}\label{bttRandom}
	For a random $n$-permutation $\Pi_n$, a.a.s.~we have
	\[
	tt^{(r)}(\Pi_n) = \left(\frac 1{r-1}+o(1)\right) \frac{\log n}{\log \log n}=btt^{(r)}(\Pi_n).
	\]
\end{thm}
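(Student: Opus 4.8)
The plan is to prove both claimed equalities at once by sandwiching. Since every block-tight $r$-twin is in particular tight, we have both $Z(k)\le Y(k)$ pointwise and $btt^{(r)}(\Pi_n)\le tt^{(r)}(\Pi_n)$, as already noted before the statement. Hence it suffices to prove an \emph{upper} bound on $tt^{(r)}(\Pi_n)$ by a first-moment bound on the $Y(k)$'s (which automatically caps $btt^{(r)}$), and a matching \emph{lower} bound on $btt^{(r)}(\Pi_n)$ by a second-moment bound on $Z(k^-)$ (which automatically forces $tt^{(r)}$ up). Both bounds will equal $\left(\tfrac1{r-1}+o(1)\right)\frac{\log n}{\log\log n}$, pinning the two parameters to the same asymptotics.

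For the upper bound set $k^+=\left\lceil\frac{(1+\epsilon_n)\log n}{(r-1)\log\log n}\right\rceil$ with $\epsilon_n\to0$ chosen exactly as in the proof of Theorem \ref{Theorem Block Twins Random}, i.e.\ so slowly that the error terms below are absorbed. Because possessing tight $r$-twins of length $k$ is not monotone in $k$, I cannot test a single value of $k$; instead I bound
$$\PP\big(tt^{(r)}(\Pi_n)\ge k^+\big)\le\PP\Big(\sum_{k\ge k^+}Y(k)>0\Big)\le\sum_{k\ge k^+}\E Y(k).$$
Using $\E Y(k)=\frac{n-rk+1}{r!}\binom{rk}{k,\dots,k}k!^{-(r-1)}$ together with $\binom{rk}{k,\dots,k}\le r^{rk}$ and Stirling, one finds $\log\E Y(k^+)\to-\infty$: the two corrections to the crude estimate — the multinomial factor, contributing $O(rk^+\log r)$, and the expansion $\log k^+=(1-o(1))\log\log n$ — are both of smaller order than $\epsilon_n\log n$. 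A one-line computation shows that $\E Y(k+1)/\E Y(k)$ is of order $r^{r}/(k+1)^{r-1}\to0$, so the tail is geometric and $\sum_{k\ge k^+}\E Y(k)\le 2\,\E Y(k^+)\to0$. Markov's inequality then gives $tt^{(r)}(\Pi_n)<k^+$ a.a.s.

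For the lower bound I apply the second-moment method to $Z(k^-)$, taking $k^-=(1+o(1))\frac{\log n}{(r-1)\log\log n}$ to be the largest integer with $\E Z(k^-)\gg k^-$; since $\E Z(k)=(n-rk+1)k!^{-(r-1)}$ crosses $1$ at $k\sim\frac{\log n}{(r-1)\log\log n}$, such a $k^-$ exists with $\E Z(k^-)\to\infty$ fast enough. Write $Z(k)=\sum_a W_a$, where $W_a$ indicates that the length-$rk$ window starting at position $a$ splits into $r$ consecutive $k$-blocks forming $r$-twins. The rigidity of block-tight twins makes the covariances simple: by Lemma \ref{ABCD} with $t=2$, $\mathrm{Cov}(W_a,W_b)=0$ once the windows are disjoint or overlap in at most one block, and for offset $|a-b|=d$ one has $s=\lfloor d/k\rfloor$ blocks of one window disjoint from the other, whence $\mathrm{Cov}(W_a,W_b)\le k!^{-(s+r-1)}$. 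For each $a$ there are only $O(k)$ positions $b$ realizing each $s\le r-2$, so
$$\mathrm{Var}\big(Z(k^-)\big)\le\E Z(k^-)+n\sum_{s=0}^{r-2}O(k^-)\,k!^{-(s+r-1)}=O\big(k^-\,\E Z(k^-)\big),$$
the $s=0$ term dominating. Chebyshev's inequality gives $\PP(Z(k^-)=0)=O\big(k^-/\E Z(k^-)\big)\to0$, so a.a.s.\ $btt^{(r)}(\Pi_n)\ge k^-$.

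The main obstacle is making the upper and lower ends meet at the \emph{same} constant $\tfrac1{r-1}$, even though $\E Y(k)$ exceeds $\E Z(k)$ by the exponentially large multinomial factor $\binom{rk}{k,\dots,k}\approx r^{rk}$. One must check that at the threshold scale this factor shifts the critical $k$ only by a lower-order amount, because $rk\log r=o\big((r-1)k\log\log n\big)$ as $\log\log n\to\infty$; this is precisely what forces $k^+$ and $k^-$ to share the leading term. The secondary, purely technical nuisance — absent in the monotone block-twin case of Theorem \ref{Theorem Block Twins Random} — is that the upper bound requires summing $\E Y(k)$ over all $k\ge k^+$ rather than evaluating at one $k$; the geometric tail estimate above is what keeps this harmless.
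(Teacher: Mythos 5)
Your proposal is correct and follows essentially the same route as the paper: a first-moment bound on $\sum_{k\ge k^+}\E Y(k)$ for the upper bound (forced by non-monotonicity), a second-moment bound on $Z(k^-)$ for the lower bound, and Lemma~\ref{ABCD} to control dependencies, with the two parameters sandwiched exactly as you describe. The only (harmless) variations are technical: you control the tail $\sum_{k\ge k^+}\E Y(k)$ via the ratio $\E Y(k+1)/\E Y(k)=O(r^r/k^{r-1})$ rather than splitting the range at $k=\log n$ as the paper does, and your covariance bookkeeping by offset $s=\lfloor d/k\rfloor$ is in fact slightly more careful than the paper's, yielding $\mathrm{Var}(Z(k^-))=O(k^-\,\E Z(k^-))$, which still suffices for Chebyshev since $\E Z(k^-)=n^{\Omega(\log\log\log n/\log\log n)}\gg k^-$.
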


\proof For $k\ge \log n$, with $c_r=er^{r/(r-1)}$,
$$\E Y(k)\le \frac{n r^{rk}}{k!^{r-1}}\le n\left(\frac{c_r}k\right)^{(r-1)k}\le n\left(\frac{c_r}{\log n}\right)^{(r-1)\log n}=o(n^{-1}),$$
and so
$$\sum_{\log n \le k\le n/r}\E Y(k)=o(1).$$

To deal with the lower range of $k$, let
$$k^+ = \left\lceil  \frac{(1+\epsilon_n)\log n}{(r-1)\log\log n}\right\rceil\quad\mbox{where}\quad\frac{\log\log\log n}{\log\log n}\ll\epsilon_n=o(1).$$ Then,
for every $ k^+\le k\le \log n$,
with $c'_r=(r-1)c_r$,
$$\E Y(k)\le n\left(\frac{c_r}k\right)^{(r-1)k}
= n\left(\frac{c'_r}{(r-1)k}\right)^{(r-1)k}
\le n\left(\frac{c'_r\log\log n}{\log n}\right)^{\frac{(1+\epsilon_n)\log n}{\log\log n}}\le n^{-\epsilon_n/2},$$
because, after taking the logarithm,
$$\log n-\frac{(1+\epsilon_n)\log n}{\log\log n}\left(\log\log n-\log\log\log n-\log c'_r\right) + \frac{\epsilon_n}{2}\log n \to-\infty.$$
Hence,
$$\sum_{k^+\le k\le\log n}\E Y(k)=O\left((\log n)n^{-\epsilon_n/2}\right)=o(1)$$
and, consequently,
by Markov's inequality,
$$\PP(\exists k\ge k^+:\;Y(k)>0)\le\sum_{k^+\le k\le n/r}\E Y(k)=o(1),$$
that is, a.a.s., $btt^{(r)}(\Pi_n)\le tt^{(r)}(\Pi_n)<k^+$.

We now establish a matching lower bound on $btt^{(r)}(\Pi_n)$ by the second moment method.  Set $k^- = \left\lfloor  \frac{\log n}{(r-1)\log\log n}\right\rfloor$. Then, $\E Z(k^-)\to\infty$, since
$$\E Z(k^-)\ge\frac{n}{2(k^-)!^{r-1}}\ge  n \left(\frac1{k^-}\right)^{(r-1)k^-}\ge n\left(\frac{\log\log n}{\log n}\right)^{\frac{\log n}{\log\log n}}\to\infty,$$
because $$\log n-\frac{\log n}{\log\log n}(\log\log n-\log\log\log n)\to\infty.$$

To bound the variance of $Z(k^-)$, for every block $B$ of length $rk$ in $[n]$ denote by $I_B$ the indicator random variable that $\Pi_n$ spans on $B$ block-tight $r$-twins and observe that as a simple consequence of Lemma \ref{ABCD} (case $s=r$), $I_B$ and $I_{B'}$ are independent whenever $B\cap B'=\emptyset$.
For $B\cap B'\neq\emptyset$ we will trivially bound $Cov(I_B,I_{B'})\le \PP(I_B=1)=(k^-)!^{-(r-1)}$. Also observe that for a fixed $B$ the number of choices of $B'$ satisfying $B\cap B'\neq\emptyset$ is $O(1)$. Thus,
$$
Var(Z(k^-))=O\left(\frac n{(k^-)!^{r-1}}\right)=\Theta\left(\E Z(k^-)\right)
$$
and, consequently,
$$
\PP(Z(k^-)=0)\le\frac{Var(Z(k^-))}{\E Z(k^-)^2}=O\left(\frac1{\E Z(k^-)}\right)=o(1),
$$
that is, a.a.s.~$tt^{(r)}(\Pi_n)\ge btt^{(r)}(\Pi_n)\ge k^-$. \qed

\section{A third point of view}\label{third}

So far we considered two scenarios with respect to the three parameters $n$ - the length of permutation, $r$ - the multiplicity of twins, and $k$ - the length of twins. In the main object of interest in this paper, parameters $bt^{(r)}(n)$, $tt^{(r)}(n)$, etc., we fixed $r$, let $n\to\infty$, and asked for the largest $k$. When studying function $f(r,k)$ in Subsection \ref{around}, we fixed $r$ and $k$ and asked for the smallest $n$. In this section we consider a third ``point of view'', where we fix $k$ and $n$ (or let $n\to\infty$) and ask for the largest $r$.

 Given $k$ and a permutation $\pi$, let $r_{bt}^{(k)}(\pi)$, resp. $r_{tt}^{(k)}(\pi)$, be the largest $r$ such that $\pi$ contains block, resp. tight, $r$-twins of length $k$. (To make this parameter well defined we allow $r=1$.)
 Define  $r_{bt}^{(k)}(n)$, resp. $r_{tt}^{(k)}(n)$, as the minimum of $r^{(k)}(\pi)$, resp. $r_{tt}^{(k)}(\pi)$, over all $n$-permutations $\pi$.

 It follows from the pigeonhole principle (cf.~the proof of Theorem~\ref{Theorem Block Twins}) that $r_{bt}^{(k)}(n)\ge \lfloor(n/k-1)/k!\rfloor+1$, which for $k=2$ can be pinpointed to $r_{bt}^{(2)}(n)=\lfloor(n+2)/4\rfloor$  by considering permutation $\pi$ with $\pi(1)<\pi(2)<\pi(3)>\pi(4)>\pi(5)<\pi(6)<\pi(7)>\pi(8)\cdots$. Also, by~\cite{DGR_weak}, Thm.~1.2,  a.a.s.~$r_{bt}^{(2)}(\Pi_n)\ge(7/20+o(1))n$, as one can take every other pair from each alternating sequence in $\Pi_n$.

From Propositions \ref{infty} and \ref{rinfty}, it follows that $r_{tt}^{(k)}(n)=1$ for $k\ge3$, while
from the lower bound  on $f(r,2)$ in Proposition \ref{ttr2} we have $r_{tt}^{(2)}(n)=O(\sqrt n)$.

As far as $r_{tt}^{(k)}(\Pi_n)$ is concerned we have a complete solution for $k=2$ only. Namely, we show that a random permutation $\Pi_n$, $n$ even, contains a.a.s.~$n/2$-twins of length 2, an optimal result comparable with the presence of a perfect matching in a graph. (In fact, we do use Hall's Theorem in the proof.) Although, we do not specify it, the proof yields the existence of $n/2$-twins of length 2 similar to $(1,2)$ as well as similar to $(2,1)$.

\begin{thm}\label{r2Pi}
A.a.s.~$r_{tt}^{(2)}(\Pi_n)=n/2$.
\end{thm}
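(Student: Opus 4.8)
The upper bound $r_{tt}^{(2)}(\Pi_n)\le n/2$ is immediate, since $r$ disjoint pairs occupy $2r\le n$ positions. For the lower bound the plan is to reformulate the existence of tight $n/2$-twins of length $2$ as a perfect-matching problem. Because the union of $n/2$ pairs of length $2$ must be a block of length $n$, i.e.\ all of $[n]$, and because a length-$2$ twin is either an ascent or a descent, finding $(1,2)$-twins amounts to partitioning $[n]$ into $n/2$ concordant (non-inversion) pairs, that is, to a perfect matching in the \emph{non-inversion graph} $G=G(\Pi_n)$ on $[n]$ in which $i\sim j$ precisely when $\{i,j\}$ is a non-inversion. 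This $G$ is the comparability graph of the permutation poset, so its independent sets are exactly the decreasing subsequences of $\Pi_n$. Reversing a permutation is measure preserving and interchanges inversions with non-inversions, so the inversion graph has the same law as $G$; hence it suffices to prove that $G$ a.a.s.\ has a perfect matching, and intersecting this event with its mirror image yields, as claimed, both $(1,2)$- and $(2,1)$-twins.

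To show that $G$ a.a.s.\ has a perfect matching I would verify Tutte's condition $q(G\setminus S)\le |S|$ for all $S\subseteq[n]$, where $q$ denotes the number of odd components. The basic estimate is that distinct components of $G\setminus S$ are pairwise non-adjacent, so a transversal choosing one vertex per component is independent in $G$; therefore
$$q(G\setminus S)\ \le\ \#\{\text{components of }G\setminus S\}\ \le\ \alpha(G)\ =\ \mathrm{LDS}(\Pi_n),$$
the length of a longest decreasing subsequence. A one-line first-moment bound, $\EE\big[\#\{\text{decreasing }t\text{-subsequences}\}\big]=\binom nt/t!$, gives $\mathrm{LDS}(\Pi_n)\le (e+o(1))\sqrt n$ a.a.s. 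This already verifies Tutte's inequality for every $S$ with $|S|\ge (e+o(1))\sqrt n$, and shows that a.a.s.\ the matching deficiency of $G$ is only $O(\sqrt n)$, i.e.\ $G$ has a matching missing at most $O(\sqrt n)$ vertices.

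The remaining, genuinely harder, task is to upgrade this near-perfect matching to a perfect one by excluding small separators. Suppose some $S$ with $s:=|S|=O(\sqrt n)$ produced $q(G\setminus S)\ge s+1$. Picking representatives of $s+1$ distinct components yields a decreasing subsequence $a_0,\dots,a_s$, and since these lie in different components, for each consecutive pair every point concordant with both $a_i$ and $a_{i+1}$ must belong to $S$. Writing $a_i=(x_i,\pi(x_i))$ with $x_i<x_{i+1}$ and $\pi(x_i)>\pi(x_{i+1})$, this common neighbourhood is the union of the regions $\{x>x_{i+1},\,y>\pi(x_i)\}$ and $\{x<x_i,\,y<\pi(x_{i+1})\}$, and the $s$ such unions must be jointly covered by the $s$ points of $S$. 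I would then bound, by a union bound over all decreasing subsequences of length $s+1$ and all $s\le (e+o(1))\sqrt n$, the probability that the connecting regions are simultaneously that sparse; intuitively a decreasing subsequence that is at all spread out has connecting regions containing $\Omega(n)$ points and so cannot be pinched off by $O(\sqrt n)$ vertices, leaving only rare ``corner-localized'' configurations to be ruled out.

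This separator estimate is the step I expect to be the main obstacle, since it is precisely the mechanism by which the near-perfect matching of the second paragraph could fail to be perfect, and it must be controlled uniformly over all small $S$. It is also the natural entry point for Hall's Theorem: once the exceptional $O(\sqrt n)$ exposed vertices are localized, a Dilworth-type chain decomposition of $G$ (whose existence rests on Hall/König) lets one reroute the matching locally and absorb them, completing the partition of $[n]$ into $n/2$ similar pairs.
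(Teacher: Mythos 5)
Your reformulation is sound: tight $n/2$-twins of length $2$ similar to $(1,2)$ are exactly a perfect matching in the non-inversion (comparability) graph $G$ of $\Pi_n$, the reversal trick handles the $(2,1)$ case, and the bound $q(G\setminus S)\le\alpha(G)=\mathrm{LDS}(\Pi_n)=O(\sqrt n)$ a.a.s.\ correctly disposes of all Tutte sets with $|S|\gtrsim e\sqrt n$. But the argument stops exactly where the theorem lives. Everything after ``the remaining, genuinely harder, task'' is a plan, not a proof: you never carry out the union bound over decreasing $(s+1)$-subsequences whose consecutive connecting regions are covered by $S$, you do not quantify ``rare corner-localized configurations,'' and the closing paragraph about rerouting a near-perfect matching via a Dilworth-type chain decomposition is not an argument at all (a deficiency of $O(\sqrt n)$ plus vague local surgery does not yield deficiency $0$; note also that $S=\emptyset$, i.e.\ connectivity and parity of components of $G$ itself, is among the cases you have left open). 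Since the entire content of the statement is that the matching is \emph{perfect} rather than merely near-perfect, the proof has a genuine gap at its only hard step --- one you yourself flag as the main obstacle.

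The paper avoids this difficulty entirely by not working in the full graph $G$. It takes the bipartite graph $B$ between the first $r=n/2$ positions and the last $r$ positions, with $ij\in B$ iff $\Pi_n(i)<\Pi_n(j)$, and verifies Hall's condition directly: degrees concentrate around $r/2$ and codegrees around $r/3$ (via Azuma--Hoeffding for permutations plus a union bound), so already for any two vertices $i,j$ on one side $|N(S)|\ge X(i)+X(j)-Y(i,j)\ge \tfrac23 r-r^{2/3}$, which beats $|S|$ for all $S$ of size at most $\tfrac{7}{12}r$; by the standard minimality reduction for Hall violators this suffices. The bipartite restriction costs nothing (a perfect matching of $B$ is still a family of $n/2$ disjoint ascending pairs) and buys a two-line verification of the expansion condition, with no small-separator analysis needed. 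If you want to salvage your route, you would need to actually prove the separator estimate; otherwise I would recommend switching to a bipartite Hall-type argument of this kind.
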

\proof Set $n=2r$ and consider an auxiliary bipartite graph $B$ between $U:=\{1,\dots,r\}$ and $W:=\{r+1,\dots,2r\}$ where $ij\in B$ if $\Pi_n(i)<\Pi_n(j)$. As $\PP(ij\in B)=1/2$, for a fixed $i=1,\dots,2r$, the random variable $X(i):=\deg_B(i)$ has expectation $\E X(i)=r/2$. Similarly, for $\{i,j\}\in\binom{U}2$ and $\ell\in W$, or $\{i,j\}\in\binom{W}2$ and $\ell\in U$, we have $\PP(i\ell\in B,\;j\ell\in B)=1/3$. Indeed, out of 6 possible relative permutations of $\Pi(i),\Pi(j)$, and $\Pi(\ell)$, exactly 2 are such that $\Pi(i)<\Pi(\ell)$ and $\Pi(j)<\Pi(\ell)$. Thus, the random variable $Y(i,j):=|\{\ell: i\ell\in B,\:j\ell\in B\}|$, or the co-degree of $i,j$ in $B$,  has expectation $\E Y(i,j)=r/3$.

 Observe that both, $X(i)$ and $Y(i,j)$, satisfy the Lipschitz condition for permutations with $c=1$, that is swapping around two values of a permutation changes the value of the function by at most 1. Thus, one may apply the Azuma-Hoeffding inequality for random permutations (see, e.g., Lemma 11 in~\cite{FP} or  Section 3.2 in~\cite{McDiarmid98}, or Thm. 2.6 in \cite{DGR}) and, using also the union bound, conclude that a.a.s.~for all $i,j$ we have $|X(i)-r/2|\le r^{2/3}$ and $|Y(i,j)-r/3|\le r^{2/3}$, for $r$ large.

We intend to apply Hall's Marriage Theorem to $B$. Recall that if the Hall's condition, $|N(S)|\ge |S|$, is violated for some $S$, then it is also violated by some $S'$ such that $|S'|\le \lceil r/2\rceil$. Thus, it is enough to check Hall's condition for, say, $|S|\le \tfrac7{12}r$, $r$ large.
If $S=\{i\}$, then, trivially,  $|N(S)|=X(i)\ge1$. If $2\le |S|\le \tfrac7{12}$, then for any $\{i,j\}\subset S$, we have
$$|N(S)|\ge |N(\{i,j\}|=X(i)+X(j)-Y(i,j)\ge\frac23r-r^{2/3}>|S|.$$
Thus, a.a.s., there is in $B$ a perfect matching which corresponds to tight $r$-twins of length 2 in $\Pi_{2r}$, similar to $(1,2)$. To obtain the other type, $(2,1)$, apply the same proof with the definition of $B$ changed to $ij\in B$ if $\Pi_n(i)>\Pi_n(j)$.
\qed

\section{Concluding Remarks}\label{cr}

We conclude with some open problems for future considerations.
In Theorem \ref{Theorem Tight Twins} we proved that $tt^{(r)}(n)\le 15r$.

\begin{prob}\label{P61}
	Is it true that $tt^{(r)}(n)\leqslant c$ for some absolute constant $c$?
\end{prob}
As mentioned after the proof of Theorem \ref{Theorem Tight Twins}, due to the weakness of the bound on $\PP(\cA_K)$, one will probably need other tools than the Local Lemma. The above probability is, however,  of its own interest. To extract the essence of the problem, let
 $n=kr$ and let $Q^{(r)}(k)$ denote the number of permutations of $[n]$ that are tight $r$-twins of length~$k$. From the proof of Theorem \ref{Theorem Tight Twins} we know already that for large $k$ with respect to $r$, $Q^{(r)}(k)/(rk)!\le p_k\to0$ exponentially fast. How about the other way around, that is, when $k$ is fixed and $r\to\infty$?

\begin{prob}\label{P62}
	Determine the asymptotic order of $Q^{(r)}(k)$ for every fixed  $k\geqslant 2$ and $r\to\infty$.
\end{prob}
Recall parameters $r_{bt}^{(k)}(\pi)$ and $r_{tt}^{(k)}(\pi)$ introduced in Section \ref{third} and note that\linebreak $\PP(r_{tt}^{(k)}(\Pi_n)=n/k)=Q^{(n/k)}(k)/n!$.
Thus, if $Q^{(r)}(k)\sim (rk)!$, this would mean that a random permutation $\Pi_n$, a.a.s.~contains $r$-twins (of length $k=n/r$) which cover it entirely, or $r^{(k)}_{tt}(\Pi_n)=n/k$.
In Section \ref{third} we proved it only for $k=2$ (cf. Theorem \ref{r2Pi}).


\begin{prob}
Find  asymptotic distributions of $r_{bt}^{(k)}(\Pi_n)$ and $r_{bt}^{(k)}(\Pi_n)$ for every fixed $k$ and $n\to\infty$.
\end{prob}

In Proposition \ref{ttr2} we showed a lower bound on $f(r,2)$.
\begin{prob}
	For  $r\ge4$, find an upper bound on $f(r,2)$, or prove that $f(r,2)=\infty$.
\end{prob}

\end{document}